\newcolumntype{P}[1]{>{\centering\arraybackslash}p{#1}}
\newtheorem{assumption}{Assumption}
\newtheorem{theorem}{Theorem}[section]
\newtheorem{corollary}{Corollary}[theorem]
\newtheorem{lemma}[theorem]{Lemma}
\newtheorem{remark}[theorem]{Remark}
\journal{Journal of \LaTeX\ Templates}
\begin{document}

\begin{frontmatter}
\title{Asynchronous Parallel Stochastic Quasi-Newton Methods}

\author{Qianqian Tong$^1$}
\ead{qianqian.tong@uconn.edu}

\author{Guannan Liang$^1$}
\ead{guannan.liang@uconn.edu}

\author{Xingyu Cai$^2$}
\ead{xingyucai@baidu.com}

\author{Chunjiang Zhu$^1$}
\ead{chunjiang.zhu@uconn.edu}

\author{Jinbo Bi$^1$\corref{mycorrespondingauthor} }
\ead{jinbo.bi@uconn.edu}
\address{$^1$University of Connecticut, Storrs, CT 06269, $^2$Baidu USA, Sunnyvale, CA 94089}

\begin{abstract} 
  Although first-order stochastic algorithms, such as stochastic gradient descent, have been the main force to scale up machine learning models, such as deep neural nets, the second-order quasi-Newton methods start to draw attention due to their effectiveness in dealing with ill-conditioned optimization problems. The L-BFGS method is one of the most widely used quasi-Newton methods. We propose an asynchronous parallel algorithm for stochastic quasi-Newton (AsySQN) method. Unlike prior attempts, which parallelize only the calculation for gradient or the two-loop recursion of L-BFGS, our algorithm is the first one that truly parallelizes L-BFGS with a convergence guarantee. Adopting the variance reduction technique, a prior stochastic L-BFGS, which has not been designed for parallel computing, reaches a linear convergence rate. We prove that our asynchronous parallel scheme maintains the same linear convergence rate but achieves significant speedup. Empirical evaluations in both simulations and benchmark datasets demonstrate the speedup in comparison with the non-parallel stochastic L-BFGS, as well as the better performance than first-order methods in solving ill-conditioned problems.
\end{abstract}

\begin{keyword}
Quasi-Newton method \sep Asynchronous parallel \sep Stochastic algorithm \sep Variance Reduction
\end{keyword}

\end{frontmatter}

\section{Introduction}
With the immense growth of data in modern life, developing parallel or distributed optimization algorithms has become a well-established strategy in machine learning, such as the widely used stochastic gradient descent (SGD) algorithm and its variants 
\cite{zinkevich2010parallelized,recht2011hogwild,duchi2011adaptive,defazio2014saga,kingma2014adam,reddi2015variance,schmidt2017minimizing}. Because gradient-based algorithms have deficiencies (e.g., zigzagging) with ill-conditioned optimization problems (elongated curvature), second-order algorithms that utilize curvature information have drawn research attention. The most well-known second-order algorithm is a set of quasi-Newton methods, particularly the Broyden-Fletcher-Goldfarb-Shanno (BFGS) method and its limited memory version (L-BFGS) \cite{dennis1977quasi,nocedal1980updating,dembo1982inexact,liu1989limited,bordes2009sgd,wang2017stochastic,mokhtari2018iqn,bottou2018optimization,karimireddy2018global,marteau2019globally,gao2019quasi,kovalev2020fast,jin2020non}. Second-order methods have several advantages over first-order methods, such as fast rate of local convergence (typically superlinear) \cite{dennis1974characterization}, and affine invariance (not sensitive to the choice of coordinates).

Stochastic algorithms have been extensively studied and substantially improved the scalability of machine learning models. Particularly, stochastic first-order methods have been a big success for which convergence is guaranteed when assuming the expectation of stochastic gradient is the true gradient. In contrast, second-order algorithms cannot directly use stochastic sampling techniques without losing the curvature information. Several schemes have been proposed to develop stochastic versions of quasi-Newton methods. Stochastic quasi-Newton method (SQN) \cite{byrd2016stochastic} uses independent large batches for updating Hessian inverse. Stochastic block BFGS \cite{gower2016stochastic} calculates subsampled Hessian-matrix product instead of Hessian-vector product to preserve more curvature information. These methods achieve a sub-linear convergence rate in the strongly convex setting. Using the variance reduction (VR) technique proposed in \cite{johnson2013accelerating}, the convergence rate can be lifted up to linear in the latest attempts \cite{moritz2016linearly,gower2016stochastic}. Later, acceleration strategies \cite{zhao2017stochastic} are combined with VR, non-uniform mini-batch subsampling, momentum calculation to derive a fast and practical stochastic algorithm. Another line of stochastic quasi-Newton studies tries to focus on solving self-concordant  functions, which requires more on the shape or property of objective functions, can reach a linear convergence rate.
Stochastic adaptive quasi-Newton methods for self-concordant functions have been proposed in \cite{zhou2017stochastic}, where the step size can be computed analytically, using only local information, and adapts itself to the local curvature.
\cite{meng2020fast} gives a global convergence rate for a stochastic variant of the BFGS method combined with stochastic gradient descent.
Other researchers use randomized BFGS \cite{kovalev2020fast}  as a variant of stochastic block BFGS and prove the linear convergence rate under self-concordant functions.
However, so far all these stochastic methods have not been designed for parallel computing.

\begin{table*}
\caption{The comparison of various quasi-Newton methods in terms of their stochastic, parallel, asynchronous frameworks, convergence rates, and if they use a variance reduction technique and limited memory update of BFGS method to make it work well for high dimensional problem. Here, our algorithm AsySQN is designed for paralleling the whole  L-BFGS method in shared memory. }\label{table1}
	\begin{center}
		\begin{tabular}{|c|c|c|c|c|c|c|}
			\hline 
			QN methods & Stochastic & Parallel &  Asy & VR &High dimensional & Convergence\\ 
			\hline\hline
			\cite{byrd2016stochastic} & $\checkmark$&  &  & & $\checkmark$& sublinear \\ 
			\cite{moritz2016linearly} & $\checkmark$ & & &$\checkmark$ &$\checkmark$& linear\\ 
			\cite{gower2016stochastic}  & $\checkmark$ & & &$\checkmark$ & $\checkmark$& linear  \\
			\cite{zhao2017stochastic} &$\checkmark$ & & &$\checkmark$  & $\checkmark$ & linear \\
			\cite{chen2014large}  & & parallel two-loop recursion &  & & $\checkmark$ & $-$ \\
			\cite{berahas2016multi} & $\checkmark$ & map reduce for gradient & & &$\checkmark$& sublinear \\
			\cite{bollapragada2018progressive} & $\checkmark$ & map reduce for gradient  & &$\checkmark$ & $\checkmark$ &linear  \\
			\cite{eisen2016decentralized,eisen2017decentralized} & & parallel calculation for gradient & $\checkmark$ & &  & $-$\\
			\cite{soori2020dave} & & parallel calculation for Hessian & $\checkmark$ & & &superlinear\\
			AsySQN &$\checkmark$ & parallel model for L-BFGS &$\checkmark$ &$\checkmark$ &$\checkmark$ &linear  \\
			\hline 
		\end{tabular}
	\end{center}
\end{table*}

Parallel quasi-Newton methods have been explored in several directions: map-reduce (vector-free L-BFGS) \cite{chen2014large} has been used to parallelize the two-loop recursion (See more discussion in Section 2) in a deterministic way; the distributed L-BFGS \cite{najafabadi2017large} is focused on the implementation of L-BFGS over high performance computing cluster (HPCC) platform, e.g. how to distribute data such that a full gradient or the two-loop recursion can be calculated fast. 
As a successful trial to create both stochastic and parallel algorithms, multi-batch L-BFGS \cite{berahas2016multi} uses map-reduce to compute both gradients and updating rules for L-BFGS. The idea of using overlapping sets to evaluate curvature information helps if a node failure is encountered. However, the size of overlapping is large, reshuffling and redistributing data among nodes may need costly communications. Progressive batching L-BFGS \cite{bollapragada2018progressive} gradually increases the batch size instead of using the full batch in the multi-batch scheme to improve the computational efficiency. Another line of work explores the decentralized  quasi-Newton methods (Decentralized BFGS) \cite{eisen2016decentralized,eisen2017decentralized}.  Taking physical network structure into consideration, the nodes communicate only with their neighbors and perform local computations. 
Recently,  the distributed averaged quasi-Newton methods and the adaptive distributed variants (DAve-QN) have been implemented in \cite{soori2020dave}, which can be very efficient for low dimensional problems because that they need to read and write the whole Hessian matrix during updating. Notice that in the analysis of DAve-QN method, they have a strong requirement for condition number of the objective functions in Lemma 2 of \cite{soori2020dave}, which is hard to satisfied in most real datasets. However, when dealing with machine learning problems under big datasets, or considering the possibility of node failures, deterministic algorithms like the decentralized ones perform no better than stochastic ones. 

\subsection{Contributions} In this paper, we present not only parallel but an asynchronous regime of SQN method with the VR technique, which we call AsySQN. The comparison of our method against existing methods is illustrated in Table \ref{table1}. AsySQN aims to design a stochastic and parallel process for every step during calculating the search directions. To the best of our knowledge, this is the first effort in implementing the SQN methods in an asynchronous parallel way instead of directly using some local acceleration techniques. The basic idea of asynchronous has been explored by first-order methods, but it is not straightforward to apply to stochastic quasi-Newton methods, and not even clear if such a procedure converges. We provide a theoretical guarantee of a linear convergence rate for the proposed algorithm. 
Notice that even though there is  another commonly used way of lifting the convergence rate, by exploring the self-concordant functions, we choose to use the VR technique so that we can have more general objective functions when conducting experiments.
In addition, we remove the requirement of gradients sparsity in previous methods \cite{recht2011hogwild,reddi2015variance}. Although we focus on the L-BFGS method, the proposed framework can be applied to other quasi-Newton methods including BFGS, DFP, and the Broyden family \cite{wright1999numerical}. In our empirical evaluation, we show that the proposed algorithm is more computational efficient than its non-parallel counterpart. Additional experiments are conducted to show that AsySQN maintains
the property of a quasi-Newton method that is to obtain a solution with high precision faster on ill-conditioned problems than first-order methods without zigzagging.

\subsection{Notations}
  The Euclidean norm of vector $x$ is denoted by $\|x\|_{2}$, and without loss of generality, we omit the subscript $2$ and use $\|x\|$. We use $x^{*}$ to denote the global optimal solution of Eq.(\ref{equ2}). For simplicity, we also use $f^{*}$ to be the corresponding optimal value $f(x^*)$. The notation $E[\cdot]$ means taking the expectation in terms of all random variables, and $I$ denotes the identity matrix. The condition number of a matrix $A$ is defined by $\kappa(A) = \frac{\sigma_{max}(A)}{\sigma_{min}(A)}$, where $\sigma_{max}(A)$ and $\sigma_{min}(A)$ denote, respectively, the largest and the smallest singular values of $A$. For square matrices $A$ and $B$ of the same size, $A \preceq B$, iff $B-A$ is positive semidefinite. Given two numbers $a \in \mathbb{Z}$ (integers), $b \in\mathbb{Z^{+}}$ (positive intergers), $mod (a,b) = 0$, if there exists an integer $k$ satisfying $a = kb$.

\section{Algorithm}
\label{sec:alg}

In this paper, we consider the following finite-sum problem :
\begin{equation}\label{equ1}
f(x):= \frac{1}{n}\sum_{i=1}^{n}f(x;z_{i})\stackrel{def}{=}\frac{1}{n}\sum_{i=1}^{n}f_{i}(x),
\end{equation}
where $\{z_{i}\} _{i=1}^{n}$ denote the training examples, $f: \mathbb{R}^{d}\rightarrow \mathbb{R}$ is a loss function that is parametrized by $x$ which is to be determined in the optimization process, and $f_{i}$ is the loss occurred on $z_i$. Most machine learning models (with model parameter $x$) can be obtained by solving the following minimization problem:
\begin{equation}\label{equ2}
\min_{x \in \mathbb{R}^{d}} f(x)=\frac{1}{n}\sum_{i=1}^{n}f_{i}(x).
\end{equation}
We aim to find an $\epsilon-$approximate solution, $x$, if $x$ satisfies:
\begin{equation}\label{equ3}
E[f(x)]-f(x^*)\leq\epsilon,
\end{equation}
where $f(x^*)$ is the global minimum if exists and $x^*$ is an optimal solution, $\epsilon$ is the targeted accuracy, and $E[f(x)]$ means to take the expectation of $f(x)$. We require a high precision solution, e.g., $\epsilon = 10^{-30}$.

\begin{algorithm}[t]
	\caption{AsySQN}
	\label{alg:framework}
	\begin{algorithmic}[1]
		\State {\bfseries Input:} Initial $w_{0}\in R^{d}$, step size $\eta \in R^{+}$, subsample size $b,b_{h}$, parameter $m$, $L$, $M$, and threads number $P$.
		\State Initialize $H_{0}=I$
		\State Initialize parameter $x$ in shared memory
		\For{$k=0,1,2,...$}	
				\State $w_k, \mu_{k} = ScheduleUpdate( x, k, m)$ (Algorithm \ref{alg:ScheduleUpdate})
		\For{$p = 0, 1, ... P-1$, \textbf{parallel}} 
		\For{$t=0$ {\bfseries to} $L-1$} 
		\State\label{Line11} Sample $S_{p,kL+t} \in \{1,2,..,n \}$
		\State Read $x$ from shared memory as $x_{p,kL+t}$
		\State Compute  $g_1 =\nabla f_{S_{p,kL+t}}(x_{p,kL+t})$
		\State\label{Line14} Compute  $g_2 =\nabla f_{S_{p,kL+t}}(x_{k})$
		\State Calculate the variance-reduced gradient: $v_{p,kL+t}=g_1-g_2+\mu_{k}$
		\If{$k \leq 1 $} 
		    \State  $x_{p,kL+t+1}= x_{p,kL+t}-\eta v_{p,kL+t}$
		\Else
	        \State  $x_{p, kL+t+1}=x_{p,kL+t}-\eta H_{k} v_{p,kL+t}$
	        \State where the search direction $p =- H_{k} v_{p,kL+t}$ is computed by the {\em Two-loop-recursion}($v_{p,kL+t}$,  $s_i$, $y_i$, $\forall i=k-M, \cdots, k-1$) (Algorithm \ref{alg:two-loop})
		\EndIf
		\State Write $x_{p, kL+t+1}$ to $x$ in shared memory
		\EndFor
		\EndFor
		\State\label{Line22} Sample $T_{k} \in \{1,2,..,n \}$
		\State\label{Line23} $x_{k}=\frac{1}{LP}\sum_{p=0}^{P-1}\sum_{i=kL}^{kL+L-1}x_{p,i}$
		\State\label{Line24} $s_{k}=x_{k}-x_{k-1}$
		\State\label{Line25} Option I: $y_{k}=\nabla f_{T_{k}}(x_{k})-\nabla f_{T_{k}}(x_{k-1} )$
		\State\label{Line26} Option II: $y_{k}=\nabla^{2}f_{T_{k}}(x_{k})s_{k} $
		\EndFor	
	\end{algorithmic}
\end{algorithm}
\begin{algorithm}[t]
	\caption{{\em Schedule-update} $(x,k,m)$}	\label{alg:ScheduleUpdate}
	\begin{algorithmic}[1]
		\State {\bfseries Input:}$ x, k, m$.
		\State {\bfseries Output:} $w_k, \mu_k$
		\If{ mod($k , m$) == 0}
		\State $w_k = x$
		\State $\mu_k = \nabla f( x)$
		\EndIf
	\end{algorithmic}
\end{algorithm}
\begin{algorithm}[t]
	\caption{{\em Two-loop-recursion} ($v$, $s_i$, $y_i$, $\forall i=1, \cdots, M$)}
	\label{alg:two-loop}
	\begin{algorithmic}[1]
		\State {\bfseries Input:} $v$, $s_i$, $y_i$ where $i=1, \cdots, M$.
		\State {\bfseries Output:} $p$ 
		\State Initialize  $p = - v$
		\For{$i = M, ..., 1$}
		\State $\alpha_i = \frac{s_i\cdot p}{s_i\cdot y_i}$
		\State $p = p - \alpha_i\cdot y_i$
		\EndFor
		\State $p = (\frac{s_{k-1}\cdot y_{k-1}}{y_{k-1}\cdot y_{k-1}} )p$
		\For {$i = 1, ..., M$}
		\State $\beta= \frac{y_i\cdot p}{s_i\cdot y_i}$
		\State $p = p + (\alpha_i - \beta)\cdot s_i$
		\EndFor
	\end{algorithmic}
\end{algorithm}

\subsection{Review of the stochastic quasi-Newton method}
  According to the classical L-BFGS methods \cite{wright1999numerical}, given the current and last iterates $x_{k}$ and $x_{k-1}$ and their corresponding gradients, we define the correction pairs:
\begin{equation}\label{equ4}
s_{k}=x_{k}-x_{k-1},~~~ y_{k}=\nabla f_{k}-\nabla f_{k-1},
\end{equation}
and define $\rho_{k}=\frac{1}{y_{k}^{T}s_{k}}$, $V_{k}=I-\rho_{k}y_{k}s_{k}^{T}$.
Then the new Hessian inverse matrix can be uniquely updated based on the last estimate $H_{k-1}$:
\begin{equation}\label{equ5}
H_{k}=(I-\rho_{k}s_{k}y_{k}^{T})H_{k-1}(I-\rho_{k}y_{k}s_{k}^{T})+\rho_{k}s_{k}s_{k}^{T}
= V_k^{T} H_{k-1}V_k + \rho_{k}s_{k}s_{k}^{T} .
\end{equation}
When datasets are high dimensional, i.e. $d>0$ is a large number, and the Hessian and Hessian inverse matrices are $d\times d $ and maybe dense, storing and directly updating $H_{k}$ might be computationally prohibitive. Instead of exploring fully dense matrices, limited-memory methods only save the latest information (say, the recent $M$) of correction pairs $\{s_k,y_k\}$ to arrive a Hessian inverse approximation. Given the initial guess $H_{0}$ , which is usually set to an identity matrix, the new Hessian inverse at the $k$-th iteration can actually be approximated by repeated application of the formula (\ref{equ5}),
\begin{align}
H_{k} &=  (V_{k}^{T}\cdot V_{k-1}^{T}\cdot \dots \cdot V_{k-M+1}^{T}) H_{k-M} (V_{k-M+1}\cdot  V_{k-M+2}\cdot \dots \cdot V_{k})\nonumber\\
&+\rho_{k-M+1}(V_{k}^{T}\cdot V_{k-1}^{T}\cdot \dots \cdot V_{k-M}^{T})s_{k-M+1} \nonumber \\
& \cdot s_{k-M+1}^{T}(V_{k-M}\cdot  V_{k-M+1}\cdot \dots\cdot V_{k})\nonumber\\
&+ \dots\nonumber\\
& + \rho_{k}s_{k}s_{k}^{T}.
\end{align}

Based on this updating rule, a two-loop recursion procedure has been derived: the first for-loop multiplying all the left-hand side vectors and the second for-loop multiplying the right-hand side ones. In practice, if $v$ denotes the current variance-reduced gradient, we directly calculate Hessian-vector product $H\cdot v$ to find the search direction rather than actually computing and storing $H$. Details are referred to Algorithm \ref{alg:two-loop}.

\subsection{The proposed quasi-Newton scheme}

Algorithm \ref{alg:framework} outlines the main steps of our AsySQN method where $k$ indexes the epochs, $p$ indexes the processors, and $t$ indexes the iterations in each epoch within each processor. Following the same sampling strategy in \cite{byrd2016stochastic}, we use two separate subsets $S$ and $T$ of data to estimate the gradient  $\nabla f_S$ (Lines 8-11) and the Hessian inverse $H_T$  (Lines 22-24), respectively. At each iteration in an epoch, $S$ is used to update the gradient whereas the Hessian inverse is updated at each epoch using a subsample $T$. 
In order to better preserve the curvature information, the size of subsample $T$ should be relatively large, whereas the subsample $S$ should be smaller to reduce the overall computation.

\begin{figure*}[ht]
	\begin{center}
		\includegraphics[width=\linewidth]{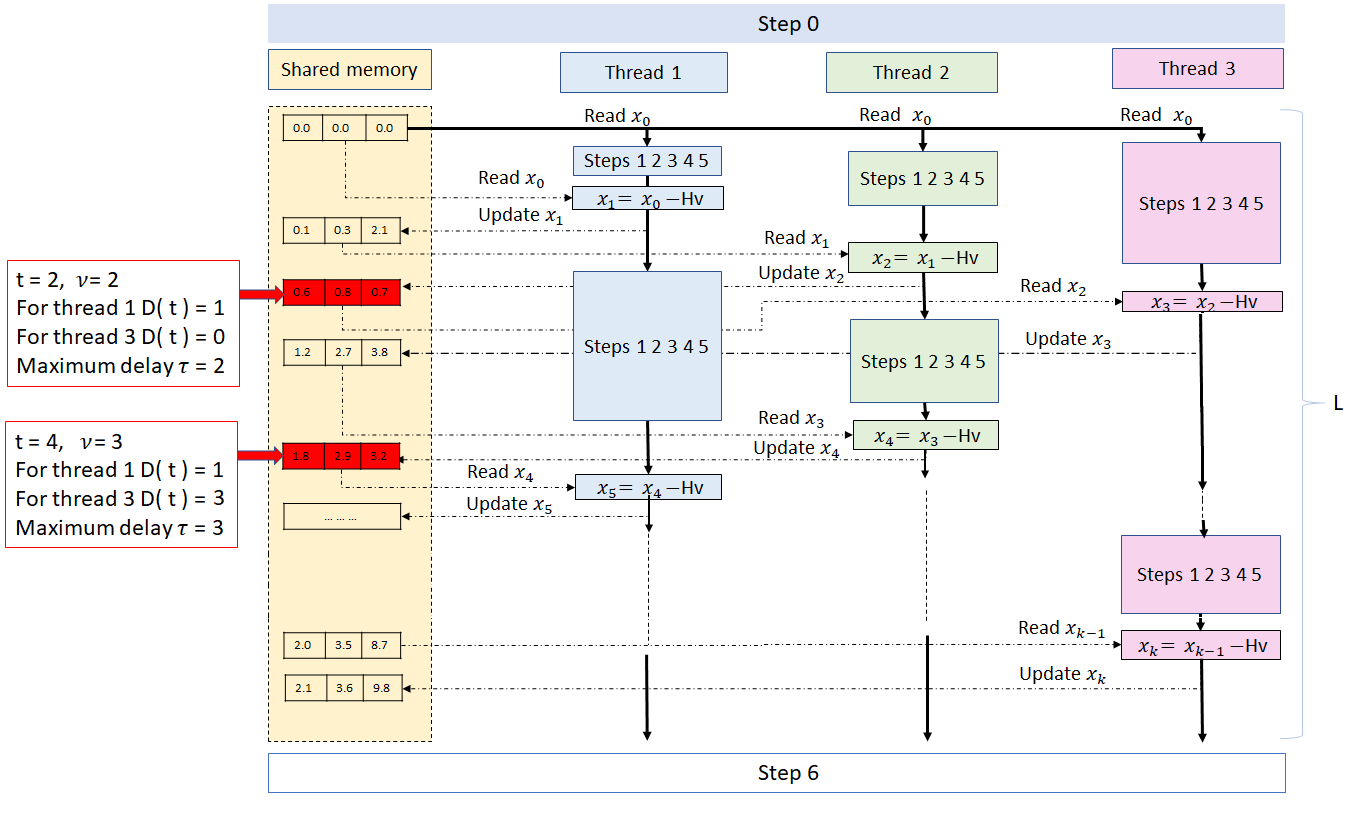}
		\caption{The key asynchronous parallel steps have been identified as follows: \textbf{1.} Subsample $S$; \textbf{2.} Read $x$ from the shared memory; \textbf{3.} Compute stochastic gradients $g_1$, $g_2$, and $v$; \textbf{4.} Compute the two-loop recursion and update $x$; \textbf{5.} Write $x$ to the shared memory; \textbf{6.} Subsample $T$ and update correction pairs. Before these steps, there is another step that computes the full gradient for all threads, i.e., \textbf{0.} Calculate full gradient. Here we show the asynchronous parallel updating of the threads where $t$ is a global timer in the epoch, recording the times that $x$ has been updated. For example, at $t = 2$, Thread $2$ updates $x$ to $x_2$ in the shared memory while Thread 1 and Thread 3 are in the middle of calculation using $x_1$ and $x_0$ respectively; hence the current $x$ index for Thread 1 is $1$ which we denote as $D(t)$ for Thread $1$, and similarly $D(t) = 0$ for Thread $2$, the maximum delay $\tau = t-D(t)$ is $2$. For Thread $2$, the delayed searching direction $H(x_{D'(2)})v(x_{D'(2)})$ is used to update $x_1$ to $x_2$, where $D'_2 = 0$; therefore the delay $\nu = 2$. Similar analysis can be applied to each state of $x$.  }
	\end{center}\label{fig1}
\end{figure*}

The algorithm requires several pre-defined parameters: step size $\eta$, limited memory size $M$, integers $m$ and $L$, the number of threads $P$ and batch size $|S|=b$, and $|T|=b_{h}$. We need the parameter $m$ because we only compute the full gradient of the objective function at the iterate $x_k$ every $m=\frac{n}{b\times L \times P}$ epochs, which will help reduce the computational cost. Algorithm \ref{alg:framework} can be split into hierarchically two parts: an inner loop (iteration) and an outer loop (epoch). The inner loop runs asynchronous parallel processes where each thread updates $x$ concurrently and the epoch loop includes the $L$ inner loops and an update on the curvature information. 

In our implementation, at the $k$-th epoch, the vector $s_{k}$ (Line 24) is the difference between the two iterates obtained before and after $L$ (parallel) inner iterations, where $x_{k}$ (Line 23) can be the average or the latest iterate in every $L$ iterations. The vector $y_{k}$ represents the difference between two gradients computed at $x_{k}$ and $x_{k-1}$ using the $T_{k}$ data subsample: $y_{k}=\nabla f_{T_{k}}(x_{k})-\nabla f_{T_{k}}(x_{k-1})$. By the mid-point theorem, there exists a $\xi \in [x_{k-1}, x_{k}]$, such that $y_{k}=\nabla^{2}f_{T_{k}}(\xi)\cdot s_{k}$. The two options (Line 25 and Line 26) of computing $y_k$ are both frequently used in quasi-Newton methods. In our numerical experiments, we run both options for comparisons to show that option II (Line 26) is more stable when the length of $s_k$, $\|s_{k}\|$, becomes small.

Stochastic gradient descent methods are generally difficult in achieving a solution of high precision, but they can reduce the objective function quickly in the beginning. Thus, a desirable way is to quickly find the optimal interval using a first-order (gradient-based) method and then switch to a second-order method. In practice, we take a warm start by the SVRG (not shown in Algorithm \ref{alg:framework}), and after certain iterations of quick descending, we then switch to the SQN method to efficiently get a more accurate optimal value.

\subsection{The proposed asynchronous parallel quasi-Newton}
 We propose a multi-threaded process in which each thread makes stochastic updates to a centrally stored vector $x$ (stored in a shared memory) without waiting, which is similar to the processes used in the asynchronous stochastic gradient descent (AsySGD) \cite{recht2011hogwild,lian2015asynchronous}, asynchronous stochastic  coordinate descent (AsySCD) \cite{liu2015asynchronous,hsieh2015passcode} and asynchronous SVRG (AsySVRG) \cite{reddi2015variance,zhao2016fast}. In multi-threaded process, there are $P$ threads, and each thread reads the latest $x$ from the shared memory, calculates the next iterate concurrently, and then write the iterate into the shared memory.

We use an example in Figure 1 to illustrate the process of this parallel algorithm. At the beginning of each epoch, the initial $x_0$ (for notational convenience) is the current $x_k$ from the shared memory.  When a thread finishes computing the next iterate, it will immediately write $x$ into the shared memory. Using the modern memory techniques, there are locks to guarantee that each time only one thread can write to the shared memory. Let us use a global timer $t$ that records any time when the shared memory is updated. In an asynchronous algorithm, when one thread is updating $x$ to $x_t$, another thread may still use an old state $x_{D(t )}$ to compute the search direction and update $x$. Let $D( t )\in [t]$ be the particular state of $x$ for a thread when the shared memory ($x$) is updated at time $t$. Specifically, let $D'( t )\in [t]$ be the state of $x$ that has been used to update $x$ to $x_t$. We assume that the maximum delay among processes is bounded by $\tau$, that is $t- D( t) \leqslant \tau$.  When Thread $p \in \{1,...,P\}$ updates $x$ to $x_t $, within this thread, the actual state of $x $ that has been used to calculate the search direction is $D'( t )$. Hence, we have  $x_{t} = x_{t -1} - \eta H(x_{D'(t)})v( x_{D'( t )})$, where the delay within Thread $p$ is $\nu = t - D'(t)$, obviously $\nu\leqslant\tau$. Considering that the maximum number of iterations in each epoch is $L$, we have $0 \leqslant \tau \leqslant L < m$.

In summary, we design an asynchronous parallel stochastic quasi-Newton method, named AsySQN, to improve the computational efficiency. In our experiments (Section 5), AsySQN shows obvious speedup compared with the original stochastic quasi-Newton methods SQN-VR due to the asynchronous parallel setting. Meanwhile, it still enjoys the advantages of common second-order methods: stability and accuracy. Furthermore, in some machine learning problems, it is not easy to do data normalization or even need more transformation to change the ill-conditioning behavior of objective function. In this case, AsySQN can perform better than AsySVRG or other first-order algorithms.

\section{Preliminaries}\label{preliminaries}
\subsection{Definitions}
  Each epoch consists of $m$ iterations, at the beginning of each epoch, suppose that the \textit{ScheduleUpdate} rule is triggered after each $m$ steps, and for brevity, we use $w_{k}$ to denote the iterations chosen at the $k$-th epoch.

At the $(k+1)-th$ epoch $t-th$ iteration:

The updating formula is: $x_{t+1}=x_{t}-\eta H_{t} v_{t}$, where step size $\eta\in \mathbb{R^{+}}$ determines how long should moves on, here we do not require diminishing way to guarantee convergence rate;

Stochastic Variance-Reduced gradient: $u_{t}=\nabla f_{i_{t}}(x_{t})-\nabla f_{i_{t}}(w_{k})+\nabla f(w_{k})$, where $i_{t}$ is randomly chosen from a subset in $S\subseteq \{1,..,n\}$. Easily, we get the expectation of stochastic Variance-Reduced gradient: $E[u_{t}]=\nabla f(x_{t})$;

The delayed stochastic Variance-Reduced (VR) gradient used in asynchronous setting: $v_{t}=\nabla f_{i_{t}}(x_{D(t)}) -\nabla f_{i_{t}}(w_{k})+\nabla f(w_{k})$. As the full gradient has been computed before, the gradient was estimated at $w_{k}$ whereas the current iteration should be $x_{D(t)}$ due to the delay. Taking expectation of the delayed gradient, we get: $E[v_{t}]=\nabla f(x_{D(t)})$.

\subsection{Assumptions}
  We make the same assumptions as non-asynchronous version of the SQN method that also uses VR technique \cite{moritz2016linearly}.

\begin{assumption}\label{hyp1}
The loss function $f_{i}$ is $\mu-$strongly convex,  that is,
\begin{displaymath}
f_{i}(y)\geq f_{i}(x)+<\nabla f_{i}(x), y-x>+\frac{\mu}{2}\|y-x\|^{2},\forall x, y.
\end{displaymath}
\end{assumption}

\begin{assumption}\label{hyp2}
The loss function $f_{i}$ has $l-$Lipschitz continuous gradients: 
\begin{displaymath}
\|\nabla f_{i}(x)-\nabla f_{i}(y)\|\leq l \|x-y\|, \forall x, y.
\end{displaymath}
\end{assumption}
Since the average of loss function $f_i$ over $i$ preserves the continuity and convexity, objective function $f$ also satisfies these two hypotheses, then we can easily derive the following lemmas.

\begin{lemma}\label{lemma1}
Suppose that Assumption \ref{hyp1} and \ref{hyp2} hold. The Hessian matrix will be bounded by two positive constants $\mu$ and $l$ such that 
\begin{displaymath}
\mu I \preceq B_{k} \preceq lI.
\end{displaymath}
\end{lemma}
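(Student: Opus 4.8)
The plan is to read $B_k$ as the limited-memory Hessian approximation, i.e. the inverse of the matrix $H_k$ that the two-loop recursion applies (the paper never defines $B_k$ separately, so $B_k := H_k^{-1}$ is the natural reading), and to control its spectrum by a trace--determinant argument seeded by the curvature carried in the correction pairs. If instead one reads $B_k$ as the true subsampled Hessian $\nabla^2 f_{T_k}$, the statement collapses to the first step below and is immediate; the substantive case is the approximation. First I would convert Assumptions~\ref{hyp1} and \ref{hyp2} into pointwise Hessian bounds: strong convexity of each $f_i$ gives $\nabla^2 f_i(x) \succeq \mu I$ and $l$-Lipschitz continuity of $\nabla f_i$ gives $\nabla^2 f_i(x) \preceq l I$, and since these are preserved under averaging, the subsampled Hessian obeys $\mu I \preceq \nabla^2 f_{T_k}(x) \preceq l I$. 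By Option~II directly (or the mid-point theorem for Option~I), the pair then satisfies $y_k = \bar B_k s_k$ with $\mu I \preceq \bar B_k \preceq l I$, which yields the two scalar estimates
\[
\mu \le \frac{s_k^{T} y_k}{s_k^{T} s_k}, \qquad \frac{y_k^{T} y_k}{s_k^{T} y_k} \le l .
\]
These are the only facts about the data that the spectral bound uses; everything else is linear-algebra bookkeeping on the BFGS recursion.

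Second, I would run the classical trace--determinant argument on the direct update dual to Eq.~(\ref{equ5}),
\[
B_k = B_{k-1} - \frac{B_{k-1}s_k s_k^{T} B_{k-1}}{s_k^{T} B_{k-1} s_k} + \frac{y_k y_k^{T}}{y_k^{T}s_k}.
\]
Taking the trace, the middle term is subtracted and nonnegative, while the last term contributes $y_k^{T}y_k/y_k^{T}s_k \le l$, so telescoping over the $M$ stored pairs gives $\operatorname{tr}(B_k) \le \operatorname{tr}(B_{k-M}) + M l$, an upper bound on the largest eigenvalue. Taking the determinant and using the BFGS identity $\det(B_k) = \det(B_{k-1})\,\frac{s_k^{T}y_k}{s_k^{T}B_{k-1}s_k}$ together with the lower scalar bound and the just-obtained trace bound forces $\det(B_k)$ to stay strictly positive and bounded below; combined with the trace bound, this keeps the smallest eigenvalue away from $0$. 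Running both bounds inductively over each limited-memory cycle then delivers $\hat\mu I \preceq B_k \preceq \hat l I$.

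The main obstacle I anticipate is matching the clean constants in the statement. The trace--determinant argument naturally produces constants depending also on the memory size $M$ and on the matrix that reinitialises each cycle, rather than literally $\mu$ and $l$. To recover the stated form I would exploit the scaling step in the two-loop recursion (Algorithm~\ref{alg:two-loop}), namely $\gamma_k = (s_{k-1}^{T}y_{k-1})/(y_{k-1}^{T}y_{k-1}) \in [1/l,\,1/\mu]$, which resets the inverse approximation to $\gamma_k I$ and hence $B_{k-M}$ to $\gamma_k^{-1} I$ with $\gamma_k^{-1} \in [\mu, l]$ at the start of every window; this anchors the spectrum and prevents drift, letting the $M$-dependence be absorbed into redefined constants. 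Handling this reset consistently across epochs --- so that the bound holds uniformly in $k$ rather than degrading as old pairs are discarded --- is the delicate part of the argument.
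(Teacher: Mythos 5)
Your one-sentence hedge is the paper's entire proof, and the argument you actually develop is aimed at the wrong matrix. In this paper $B_k$ denotes the (subsampled) Hessian of the objective, not the L-BFGS approximation $H_k^{-1}$: the sentence immediately following the lemma reads that the second-order information of $f$ is bounded as $\mu I \preceq \nabla^{2} f \preceq l I$, ``therefore all Hessian matrices $B$ share the same bounds.'' The intended proof is exactly the step you dismiss as the collapsed case: Assumption~\ref{hyp1} gives $\nabla^{2} f_i \succeq \mu I$, Assumption~\ref{hyp2} gives $\nabla^{2} f_i \preceq l I$, and averaging over any subsample $T_k$ preserves both bounds. The quasi-Newton approximation matrices are handled separately in Lemma~\ref{lemma2}, whose proof the paper does not reprove but cites from the SQN-VR literature.

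The reading you commit to does not merely take a different route; it makes the statement false as written, and your own ``main obstacle'' paragraph is the symptom of this. For the limited-memory approximation, the trace--determinant argument cannot deliver the constants $\mu$ and $l$: seeding the trace recursion with $B_{k-M} = \gamma_k^{-1} I$, $\gamma_k^{-1} \in [\mu, l]$, gives $\operatorname{tr}(B_k) \le dl + Ml$, hence only $\lambda_{\max}(B_k) \le (d+M)l$, and the determinant lower bound degrades analogously. This is precisely why Lemma~\ref{lemma2} carries the constants $\mu_{1}=\frac{1}{(d+M)l}$ and $\mu_{2}=\frac{((d+M)l)^{d+M-1}}{\mu^{d+M}}$, so that $\kappa(H) = (d+M)^{d+M}(\kappa(B))^{d+M}$; the paper explicitly emphasizes that the conditioning of the approximation is \emph{amplified} relative to $\kappa(B)=l/\mu$. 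If your reading of Lemma~\ref{lemma1} were correct, it would contradict Lemma~\ref{lemma2} by asserting the approximation's condition number is at most $l/\mu$. The scaling-step ``reset'' does anchor the seed matrix in $[\mu, l]$, but it cannot remove the dimension and memory factors; absorbing the $M$- and $d$-dependence ``into redefined constants'' concedes that you are proving a different statement with weaker constants---which is the content of Lemma~\ref{lemma2}, not of Lemma~\ref{lemma1}.
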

The second-order information of the objective function $f$ is bounded by $ \mu$ and $l$ : $\mu I\leq\nabla^{2}f\leq l I$, therefore all Hessian matrices $B$ share the same bounds and generally we have the condition number of the objective function: $\kappa(B)=\frac{l}{\mu}\geq 1$.

\begin{lemma}\label{lemma2}
Suppose that Assumption \ref{hyp1} and \ref{hyp2} hold. Let $H_k$ be the Hessian inverse matrix. Then for all $k\geq 1$, there exist constants $0 < \mu_1 < \mu_2$ such that $H_k$ satisfies
\begin{displaymath}
\mu_{1} I \preceq H_{k} \preceq \mu_{2}I,
\end{displaymath}
where $\mu_{1}=\frac{1}{(d+M)l}$, $\mu_{2}=\frac{((d+M)l)^{d+M-1}}{\mu^{d+M}}$, $l$ is the Lipschitz constant, $M$ is the limited memory size, $d$ is the length of $x$. 
\end{lemma}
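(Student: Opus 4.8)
\emph{Strategy.} The plan is to pass from the inverse approximation $H_k$ to the direct L-BFGS Hessian approximation $\widehat B_k := H_k^{-1}$, because the BFGS update admits clean \emph{trace} and \emph{determinant} formulas for $\widehat B_k$, and then to convert eigenvalue bounds on $\widehat B_k$ into the claimed bounds on $H_k$ by reciprocation: $\sigma_{\min}(H_k)=1/\sigma_{\max}(\widehat B_k)$ and $\sigma_{\max}(H_k)=1/\sigma_{\min}(\widehat B_k)$. In the limited-memory scheme $\widehat B_k$ is obtained by applying $M$ rank-two BFGS updates, using the stored pairs $\{s_j,y_j\}$, to the initial matrix $\gamma_k I$, where the scaling in the two-loop recursion corresponds to $\gamma_k=\|y_{k-1}\|^2/(y_{k-1}^{T} s_{k-1})$. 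The first step is to record the per-pair estimates implied by Lemma~\ref{lemma1}: writing $y_j=\nabla^2 f_{T_j}(\xi_j)\,s_j$ (the mean-value identity for Option~I, or exactly for Option~II) with $\mu I\preceq\nabla^2 f_{T_j}(\xi_j)\preceq lI$ gives
\[
\mu\|s_j\|^2\le y_j^{T} s_j\le l\|s_j\|^2,\qquad \frac{\|y_j\|^2}{y_j^{T} s_j}\le l,
\]
and in particular $\mu\le\gamma_k\le l$.

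\emph{Lower bound $H_k\succeq\mu_1 I$.} A single BFGS update of $\widehat B$ subtracts a positive semidefinite rank-one term and adds $y_jy_j^{T}/(y_j^{T} s_j)$, so the trace can only grow through the added term: $\text{tr}(\widehat B^{(j)})\le \text{tr}(\widehat B^{(j-1)})+\|y_j\|^2/(y_j^{T} s_j)\le \text{tr}(\widehat B^{(j-1)})+l$. Starting from $\text{tr}(\gamma_k I)=d\gamma_k\le dl$ and iterating $M$ times yields $\sigma_{\max}(\widehat B_k)\le \text{tr}(\widehat B_k)\le (d+M)l$, hence $\sigma_{\min}(H_k)\ge 1/((d+M)l)=\mu_1$.

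\emph{Upper bound $H_k\preceq\mu_2 I$.} Here I would use the determinant update $\det(\widehat B^{(j)})=\det(\widehat B^{(j-1)})\,(y_j^{T} s_j)/(s_j^{T} \widehat B^{(j-1)} s_j)$. Each factor splits as $\big(y_j^{T} s_j/\|s_j\|^2\big)\big(\|s_j\|^2/(s_j^{T}\widehat B^{(j-1)}s_j)\big)\ge \mu/\sigma_{\max}(\widehat B^{(j-1)})\ge \mu/((d+M)l)$, where the last inequality reuses the trace bound, valid \emph{uniformly} for every intermediate matrix. With $\det(\gamma_k I)=\gamma_k^{d}\ge\mu^{d}$, multiplying the $M$ factors gives $\det(\widehat B_k)\ge \mu^{d+M}/((d+M)l)^{M}$. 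Finally, isolating the smallest eigenvalue by dividing the determinant by the product of the remaining $d-1$ eigenvalues (each $\le\sigma_{\max}(\widehat B_k)\le (d+M)l$) gives $\sigma_{\min}(\widehat B_k)\ge \mu^{d+M}/((d+M)l)^{d+M-1}$, so that $\sigma_{\max}(H_k)\le ((d+M)l)^{d+M-1}/\mu^{d+M}=\mu_2$.

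\emph{Main obstacle.} The delicate part is the determinant argument rather than the trace argument. Two points need care: first, the trace bound must be invoked \emph{for all} intermediate matrices $\widehat B^{(j-1)}$, not just the final $\widehat B_k$, in order to control every denominator $s_j^{T}\widehat B^{(j-1)}s_j$; and second, the passage from the determinant lower bound to a lower bound on $\sigma_{\min}(\widehat B_k)$ relies on simultaneously possessing the $\sigma_{\max}$ (trace) upper bound, so the two halves of the proof are genuinely coupled. Verifying $\mu\le\gamma_k\le l$ for the specific scaling used by the two-loop recursion underpins both halves and should be checked explicitly.
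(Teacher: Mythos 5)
Your proof is correct and takes essentially the same route as the paper's: the paper does not prove Lemma~\ref{lemma2} itself but defers to \cite{moritz2016linearly}, whose argument is precisely your trace--determinant analysis of $\widehat B_k = H_k^{-1}$ (trace bound on intermediate matrices for $\sigma_{\max}(\widehat B_k)$, determinant formula combined with that trace bound for $\sigma_{\min}(\widehat B_k)$, then reciprocation). Your per-pair estimates, the bound $\mu \le \gamma_k \le l$ for the two-loop scaling, and the observation that the trace bound must hold uniformly over all intermediate updates all match the cited proof, with no gaps.
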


Proof can be referenced from \cite{moritz2016linearly}. Similarly, all generated Hessian inverse matrices have the same following condition number: $\kappa(H) = \frac{\mu_{2}}{\mu_{1}} = (d+M)^{d+M}(\kappa(B))^{d+M}$. Obviously, the ill-conditioning degree of Hessian inverse will surely be amplified. For large scale datasets, the shape of objective function can be extreme ill-conditioned. Thus, how to achieve a fair performance in this ill-conditioned situation deserves our attention. As a second-order method, our AsySQN still enjoys fast convergence rate in ill-conditioned situation (see the main theorem \ref{main theorem} and Corrollary \ref{cor}). We also do extensive experiments showing this property in our simulation numerical analysis.

\section{Convergence Analysis}\label{convergence analysis}
  In the asynchronous parallel setting, all working nodes read and update without synchronization. This will cause delayed updates for some other threads. Hence, the stochastic variance-reduced gradients may not be computed using the latest iterate of $x$. Here, we use gradient $v_{t}$ defined above, then each thread uses each its own $v_{t}$ to update $x_{t+1} = x_{t} - \eta H_{t} v_{t}$.

\begin{lemma}\label{lemma3}
In one epoch, the delay of parameter $x$ reading from the shared memory can be bounded by:
\begin{displaymath}
E[\|x_{t}-x_{D(t)}\|]\leq\eta\mu_{2}\sum_{j=D(t)}^{t-1}E[\|v_{j}\|].	
\end{displaymath}
\end{lemma}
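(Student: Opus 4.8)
The plan is to express the difference $x_t - x_{D(t)}$ as a telescoping sum of consecutive updates, then apply the triangle inequality followed by the bound on the Hessian inverse from Lemma \ref{lemma2}. Concretely, since the shared memory is updated one step at a time by the global timer $t$, each single update takes the form $x_{j+1} = x_j - \eta H_j v_j$ for an appropriate Hessian inverse $H_j$ and delayed variance-reduced gradient $v_j$. Summing these increments from index $D(t)$ up to $t-1$ gives the telescoping identity
\begin{displaymath}
x_t - x_{D(t)} = -\eta \sum_{j=D(t)}^{t-1} H_j v_j,
\end{displaymath}
which is the natural starting point. The number of terms here is exactly $t - D(t)$, which by the delay assumption is at most $\tau$, though for this lemma I only need the explicit index range.

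Next I would take norms on both sides and push the norm through the sum via the triangle inequality, obtaining $\|x_t - x_{D(t)}\| \le \eta \sum_{j=D(t)}^{t-1} \|H_j v_j\|$. The key ingredient is then submultiplicativity of the operator norm together with Lemma \ref{lemma2}: since $\mu_1 I \preceq H_j \preceq \mu_2 I$, the spectral norm of $H_j$ is bounded by $\mu_2$, so $\|H_j v_j\| \le \mu_2 \|v_j\|$. Substituting this bound termwise yields $\|x_t - x_{D(t)}\| \le \eta \mu_2 \sum_{j=D(t)}^{t-1} \|v_j\|$. Finally I would take expectations of both sides; since the inequality holds pointwise (for every realization of the random sampling), linearity and monotonicity of expectation deliver precisely
\begin{displaymath}
E[\|x_t - x_{D(t)}\|] \le \eta \mu_2 \sum_{j=D(t)}^{t-1} E[\|v_j\|],
\end{displaymath}
which is the claimed statement.

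I do not expect any single step to be a genuine obstacle; this lemma is essentially a bookkeeping result that packages the effect of asynchronous delay into a clean bound, and the real analytic work is deferred to later lemmas that control $E[\|v_j\|]$ itself. The one point requiring mild care is making sure the update indexing is consistent with the asynchronous model: the increments between timer ticks $D(t)$ and $t$ must each correspond to a genuine single write to shared memory of the form $-\eta H_j v_j$, so that the telescoping sum has exactly $t - D(t)$ terms and no missing or double-counted updates. Once that indexing is pinned down, the bound on $\|H_j\|$ via Lemma \ref{lemma2} and the triangle inequality finish the argument immediately, and taking expectation is routine because the deterministic inequality holds for each sample path before averaging.
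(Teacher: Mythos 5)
Your proposal is correct and follows essentially the same route as the paper's proof: telescope the difference $x_t - x_{D(t)}$ into single-step updates, apply the triangle inequality, bound each step norm via $\|H_j v_j\| \leq \mu_2 \|v_j\|$ using Lemma~\ref{lemma2}, and take expectations. The only (immaterial) difference is that the paper applies the expectation before bounding the increments rather than at the end, and your attention to the indexing even fixes a small typo in the paper's displayed proof, where $H_t v_t$ appears inside the sum in place of $H_j v_j$.
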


\begin{proof}
From Lemma \ref{lemma2} and updating formula  $x_{t+1} = x_{t} - \eta H_{t} v_{t}$,
\begin{align}
	E[\|x_{t}-x_{D(t)}\|]&\leq\sum_{j=D(t)}^{t-1}E[\|x_{j+1}-x_{j}\|]\nonumber\\
	&=\sum_{j=D(t)}^{t-1}\eta E[\| H_{t} v_{t} \|]\nonumber\\
	&\leq\eta\mu_{2}\sum_{j=D(t)}^{t-1}E[\|v_{j}\|].\nonumber	
\end{align}
\end{proof}

\begin{lemma}\label{lemma4}
Suppose that Assumption \ref{hyp2} holds, the delay in the stochastic VR gradient of the objective function $f:$ 
\begin{displaymath}
E[\|u_{t}-v_{t}\|]\leq l \eta \mu_{2}\sum_{j=D(t)}^{t-1}E[\|v_{j}\|].
\end{displaymath}
\end{lemma}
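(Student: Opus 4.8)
The plan is to exploit the algebraic structure of the two gradient estimators so that the difference collapses to a single Lipschitz-controllable term, and then to invoke Lemma \ref{lemma3} directly. First I would write out the definitions of $u_t$ and $v_t$ from the Preliminaries: the stochastic variance-reduced gradient is $u_{t}=\nabla f_{i_{t}}(x_{t})-\nabla f_{i_{t}}(w_{k})+\nabla f(w_{k})$, while the delayed version is $v_{t}=\nabla f_{i_{t}}(x_{D(t)})-\nabla f_{i_{t}}(w_{k})+\nabla f(w_{k})$. The crucial observation is that both share the correction terms $-\nabla f_{i_{t}}(w_{k})+\nabla f(w_{k})$, so upon subtraction these cancel and we are left with $u_{t}-v_{t}=\nabla f_{i_{t}}(x_{t})-\nabla f_{i_{t}}(x_{D(t)})$. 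This is the key step, since it reduces the whole estimate to the discrepancy between two gradients of the same component function evaluated at the current iterate and its delayed counterpart.

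Next I would apply Assumption \ref{hyp2} ($l$-Lipschitz continuity of the gradients $\nabla f_{i}$) pointwise for the sampled index $i_{t}$, giving $\|u_{t}-v_{t}\|=\|\nabla f_{i_{t}}(x_{t})-\nabla f_{i_{t}}(x_{D(t)})\|\leq l\,\|x_{t}-x_{D(t)}\|$. Taking expectations of both sides and using linearity, this yields $E[\|u_{t}-v_{t}\|]\leq l\,E[\|x_{t}-x_{D(t)}\|]$. I would then substitute the bound from Lemma \ref{lemma3}, namely $E[\|x_{t}-x_{D(t)}\|]\leq\eta\mu_{2}\sum_{j=D(t)}^{t-1}E[\|v_{j}\|]$, which immediately produces the claimed inequality $E[\|u_{t}-v_{t}\|]\leq l\eta\mu_{2}\sum_{j=D(t)}^{t-1}E[\|v_{j}\|]$.

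I do not anticipate a genuine technical obstacle here; the result is essentially a composition of the Lipschitz property with the previously established delay bound. The only point requiring mild care is ensuring the Lipschitz inequality is valid for the random component $f_{i_{t}}$ before taking expectations: since Assumption \ref{hyp2} holds uniformly in $i$, the bound $\|\nabla f_{i_{t}}(x_{t})-\nabla f_{i_{t}}(x_{D(t)})\|\leq l\|x_{t}-x_{D(t)}\|$ holds for every realization of $i_{t}$, so passing to the expectation is unproblematic. The argument is therefore short and mechanical once the cancellation in the first step is recognized.
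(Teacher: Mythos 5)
Your proposal is correct, and it follows the same skeleton as the paper's proof: cancel the common variance-reduction terms, apply Lipschitz continuity, and invoke Lemma \ref{lemma3}. One point is worth noting, however: your execution of the Lipschitz step is actually tighter than the paper's. The paper begins with the claim
\begin{displaymath}
E[\|u_{t}-v_{t}\|] = E[\|\nabla f(x_{t}) -\nabla f(x_{D(t)})\|],
\end{displaymath}
i.e., it replaces the sampled gradient difference $\nabla f_{i_{t}}(x_{t})-\nabla f_{i_{t}}(x_{D(t)})$ by the full gradient difference inside the norm before bounding. As an equality this is not valid: conditioning on the iterate history, Jensen's inequality gives $\|\nabla f(x_{t})-\nabla f(x_{D(t)})\| = \|E_{i_t}[u_{t}-v_{t}]\| \leq E_{i_t}[\|u_{t}-v_{t}\|]$, which runs in the wrong direction for establishing an upper bound on $E[\|u_{t}-v_{t}\|]$. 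Your version avoids this entirely by applying Assumption \ref{hyp2} pointwise to the sampled component $f_{i_{t}}$ --- valid for every realization of $i_{t}$ since the Lipschitz constant $l$ is uniform in $i$ --- and only then taking expectations. This yields exactly the same final bound but with no gap, so your proof is not merely equivalent to the paper's; it is the repaired form of it.
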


\begin{proof}
From Assumption \ref{hyp2}, Lemma \ref{lemma2} and \ref{lemma3},
\begin{align}
	E[\|u_{t}-v_{t}\|] &= E[\|\nabla f(x_{t}) -\nabla f(x_{D(t)})\|]\nonumber\\
	&\leq l E[\|x_{t}-x_{D(t)}\|] \nonumber\\
	&\leq l \eta \mu_{2}\sum_{j=D(t)}^{t-1}E[\|v_{j}\|].\nonumber
\end{align}
\end{proof}

\begin{lemma}\label{lemma5}
	Suppose that Assumption \ref{hyp1} holds, denote $w^{*}$ to be the unique minimizer of objective function $f$. Then for any $x$, we have
\begin{displaymath}
	\|\nabla f(x)\|^{2}\geq 2\mu(f(x)-f(w^{*}))
\end{displaymath}
\end{lemma}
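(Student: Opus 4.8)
The plan is to derive this Polyak–Łojasiewicz–type inequality directly from the $\mu$-strong convexity of $f$, which $f$ inherits from Assumption~\ref{hyp1} since it is the average of the $f_i$ and strong convexity is preserved under averaging. The idea is to construct a global quadratic lower bound on $f$ anchored at an arbitrary point $x$, minimize that quadratic in closed form, and then compare its minimum value against $f(w^*)$. This avoids needing any explicit handle on $w^*$ itself.

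First I would write out strong convexity of $f$ at $x$: for every $y$,
\[
f(y) \geq f(x) + \langle \nabla f(x),\, y-x\rangle + \frac{\mu}{2}\|y-x\|^2 =: g(y).
\]
The right-hand side $g$ is a strictly convex quadratic in $y$, so its unique minimizer is obtained by setting $\nabla g(y) = \nabla f(x) + \mu(y-x) = 0$, which gives $y = x - \frac{1}{\mu}\nabla f(x)$. Substituting this back yields the explicit minimum
\[
\min_{y} g(y) = f(x) - \frac{1}{2\mu}\|\nabla f(x)\|^2.
\]

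Next I would exploit that $f(y) \geq g(y)$ holds pointwise in $y$. Evaluating at the minimizer $w^*$ of $f$ and chaining inequalities gives $f(w^*) = \min_y f(y) \geq \min_y g(y)$, since pointwise domination forces the minimum of $f$ to dominate the minimum of $g$. Combining this with the explicit value above produces $f(w^*) \geq f(x) - \frac{1}{2\mu}\|\nabla f(x)\|^2$, and a direct rearrangement delivers the claimed bound $\|\nabla f(x)\|^2 \geq 2\mu\bigl(f(x) - f(w^*)\bigr)$.

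The computation is entirely routine, so I do not expect a genuine obstacle; this is the standard consequence of strong convexity. The only step deserving a careful word is the passage $\min_y f \geq \min_y g$: it requires no knowledge of $w^*$, following simply because pointwise domination $f(y)\geq g(y)$ implies $f(w^*) \geq g(w^*) \geq \min_y g(y)$. The mild subtlety is to minimize the quadratic lower bound rather than naively plug $y = w^*$ into strong convexity, which keeps the argument clean and independent of any closed form for the optimizer.
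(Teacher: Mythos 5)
Your proof is correct and matches the paper's own argument: both derive the quadratic lower bound from $\mu$-strong convexity, minimize it in closed form at $y = x - \frac{1}{\mu}\nabla f(x)$ to obtain the value $f(x) - \frac{1}{2\mu}\|\nabla f(x)\|^{2}$, and conclude via $f(w^{*}) \geq \min_{y} g(y)$ (the paper writes this as evaluating strong convexity at $w^{*}$ and then minimizing over $\xi = w^{*}-x$, which is the same chain of inequalities). The distinction you flag at the end is only one of presentation, not substance.
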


\begin{proof}
	From Assumption \ref{hyp1},
\begin{align}
	f(w^{*}) &\geq f(x)+\nabla f(x)^{T}(w^{*}-x)+\frac{\mu}{2}\|w^{*}-x\|^{2}\nonumber\\
	&\geq f(x)+\min_{\xi}(\nabla f(x)^{T}\xi+\frac{\mu}{2}\|\xi\|^{2})\nonumber\\
	&=f(x)-\frac{1}{2\mu}\|\nabla f(x)\|^{2}.\nonumber
\end{align}
Letting $\xi=w^{*}-x,$ when $\xi=-\frac{\nabla f(x)}{\mu}$, the quadratic function can achieve its minimum.
\end{proof}

\begin{lemma}\label{lemma6}
Suppose that Assumptions \ref{hyp1} and \ref{hyp2} hold. Let $f^{*}$ be the optimal value of Eq.(\ref{equ2}). Considering the stochastic variance-reduced gradient, we have:
\begin{displaymath}
E[\|u_{t}\|^{2}]\leq 4 l E[f(x_{t})-f^{*}+f(w_{k})-f^{*}].
\end{displaymath}
\end{lemma}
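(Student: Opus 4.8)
The plan is to adapt the classic SVRG variance bound of Johnson--Zhang to the present notation. Write $w^{*}$ for the unique minimizer guaranteed by Assumption \ref{hyp1}, so that $\nabla f(w^{*}) = 0$. The idea is to insert the ``anchor'' gradient $\nabla f_{i_{t}}(w^{*})$ into the definition of $u_{t}$ and split the result into a term measuring the suboptimality of $x_{t}$ and a term measuring the suboptimality of $w_{k}$. First I would rewrite
\begin{align}
u_{t} &= \big[\nabla f_{i_{t}}(x_{t}) - \nabla f_{i_{t}}(w^{*})\big] \nonumber\\
&\quad - \big[\nabla f_{i_{t}}(w_{k}) - \nabla f_{i_{t}}(w^{*}) - \nabla f(w_{k})\big], \nonumber
\end{align}
and apply $\|a-b\|^{2} \le 2\|a\|^{2} + 2\|b\|^{2}$, so that after taking the expectation over the random index $i_{t}$ (conditioned on $x_{t}$ and $w_{k}$),
\begin{align}
E\|u_{t}\|^{2} &\le 2\,E\big\|\nabla f_{i_{t}}(x_{t}) - \nabla f_{i_{t}}(w^{*})\big\|^{2} \nonumber\\
&\quad + 2\,E\big\|\nabla f_{i_{t}}(w_{k}) - \nabla f_{i_{t}}(w^{*}) - \nabla f(w_{k})\big\|^{2}. \nonumber
\end{align}

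The core estimate, and the step I expect to be the main obstacle, is the per-component inequality
\[
\frac{1}{n}\sum_{i=1}^{n} \big\|\nabla f_{i}(x) - \nabla f_{i}(w^{*})\big\|^{2} \le 2l\,[f(x) - f^{*}],
\]
which upgrades the Lipschitz-smoothness of Assumption \ref{hyp2} into a bound controlled by the function-value gap. To establish it I would introduce, for each $i$, the auxiliary function $g_{i}(x) = f_{i}(x) - f_{i}(w^{*}) - \langle \nabla f_{i}(w^{*}), x - w^{*}\rangle$, which by Assumptions \ref{hyp1}--\ref{hyp2} is convex, $l$-smooth, and minimized at $w^{*}$ with $g_{i}(w^{*}) = 0$. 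The standard smooth-convex inequality $\|\nabla g_{i}(x)\|^{2} \le 2l\,(g_{i}(x) - \min g_{i})$ then gives $\|\nabla f_{i}(x) - \nabla f_{i}(w^{*})\|^{2} \le 2l\,g_{i}(x)$; averaging over $i$ and using $\frac{1}{n}\sum_{i}\nabla f_{i}(w^{*}) = \nabla f(w^{*}) = 0$ cancels the linear terms and leaves $2l[f(x)-f^{*}]$. Getting this cancellation of the first-order terms correct is the delicate bookkeeping, since it is precisely where the global optimality of $w^{*}$ for the \emph{average} $f$ (rather than for each $f_{i}$ separately) is used.

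With this lemma in hand the two terms finish quickly. The first term is bounded directly by $2l[f(x_{t})-f^{*}]$, applying the estimate at $x = x_{t}$. For the second term I would observe that the bracket is exactly the centered quantity $\zeta - E[\zeta]$ with $\zeta = \nabla f_{i_{t}}(w_{k}) - \nabla f_{i_{t}}(w^{*})$, because $E[\zeta] = \nabla f(w_{k}) - \nabla f(w^{*}) = \nabla f(w_{k})$; hence $E\|\zeta - E[\zeta]\|^{2} \le E\|\zeta\|^{2} \le 2l[f(w_{k})-f^{*}]$ by the same lemma applied at $x = w_{k}$. Substituting both bounds yields $E\|u_{t}\|^{2} \le 4l[f(x_{t})-f^{*} + f(w_{k})-f^{*}]$, and taking the full expectation gives the claim.
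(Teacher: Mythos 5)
Your proposal is correct, and it is essentially the paper's own approach: the paper proves this lemma only by citation to Johnson--Zhang's SVRG paper, and your argument is precisely that standard variance bound reconstructed in full --- the decomposition through the anchor $\nabla f_{i_t}(w^*)$, the smooth-convex estimate $\frac{1}{n}\sum_i\|\nabla f_i(x)-\nabla f_i(w^*)\|^2\le 2l\,[f(x)-f^*]$ via the auxiliary functions $g_i$, and the variance inequality $E\|\zeta-E[\zeta]\|^2\le E\|\zeta\|^2$ for the second term. All steps, including the cancellation of the linear terms using $\nabla f(w^*)=0$, are handled correctly.
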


Proof reference \cite{johnson2013accelerating}.

\begin{theorem}\label{theorem1}
Suppose that Assumptions \ref{hyp1} and \ref{hyp2} hold, the gradient delay in an epoch can be measured in the following way:
\begin{displaymath}
	\sum_{t=km}^{km+m-1}E[\|v_{t}\|^{2}]\leq \frac{2}{1-2l^{2}\eta^{2}\mu_{2}^{2}\tau^{2}}\sum_{t=km}^{km+m-1}E[\|u_{t}\|^{2}].
\end{displaymath}	
\end{theorem}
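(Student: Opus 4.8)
The plan is to express each delayed gradient $v_t$ as a perturbation of the non-delayed gradient $u_t$ and to control that perturbation with a squared analogue of Lemmas \ref{lemma3} and \ref{lemma4}. First I would apply the elementary inequality $\|a+b\|^2 \le 2\|a\|^2 + 2\|b\|^2$ to the splitting $v_t = u_t - (u_t - v_t)$, giving $E[\|v_t\|^2] \le 2E[\|u_t\|^2] + 2E[\|u_t - v_t\|^2]$. Summing over $t$ from $km$ to $km+m-1$ reduces the theorem to bounding $\sum_t E[\|u_t - v_t\|^2]$ by a multiple of $\sum_t E[\|v_t\|^2]$ with a coefficient small enough to be absorbed onto the left-hand side.

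The core work is a squared delay estimate. Since $u_t - v_t = \nabla f_{i_t}(x_t) - \nabla f_{i_t}(x_{D(t)})$, Assumption \ref{hyp2} gives $E[\|u_t - v_t\|^2] \le l^2 E[\|x_t - x_{D(t)}\|^2]$. I would then redo the telescoping argument of Lemma \ref{lemma3} at the level of squares: from the update $x_{j+1} = x_j - \eta H_j v_j$ and the bound $H_j \preceq \mu_2 I$ of Lemma \ref{lemma2} we have $\|x_t - x_{D(t)}\| \le \eta\mu_2\sum_{j=D(t)}^{t-1}\|v_j\|$, and since the delay satisfies $t - D(t) \le \tau$ the inner sum has at most $\tau$ terms, so Cauchy--Schwarz yields $\left(\sum_{j=D(t)}^{t-1}\|v_j\|\right)^2 \le \tau\sum_{j=D(t)}^{t-1}\|v_j\|^2$. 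Combining these gives $E[\|u_t - v_t\|^2] \le l^2\eta^2\mu_2^2\,\tau\sum_{j=D(t)}^{t-1}E[\|v_j\|^2]$.

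The last step is a double-sum exchange. Summing the previous bound over $t$ in the epoch, a fixed index $j$ falls in the inner range $D(t)\le j \le t-1$ only when $j+1 \le t \le j+\tau$, that is for at most $\tau$ values of $t$; hence $\sum_t\sum_{j=D(t)}^{t-1}E[\|v_j\|^2] \le \tau\sum_t E[\|v_t\|^2]$. This produces the second factor of $\tau$ and yields $\sum_t E[\|u_t - v_t\|^2] \le l^2\eta^2\mu_2^2\tau^2\sum_t E[\|v_t\|^2]$. Substituting into the decomposition of the first paragraph and moving the $v_t$-term to the left gives $(1 - 2l^2\eta^2\mu_2^2\tau^2)\sum_t E[\|v_t\|^2] \le 2\sum_t E[\|u_t\|^2]$, which is the claimed inequality after division.

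I expect the principal obstacle to be that Lemmas \ref{lemma3} and \ref{lemma4} are stated only for first moments $E[\|\cdot\|]$, so they cannot be quoted verbatim; the squared versions must be re-derived, and the factor $\tau^2$ appears only if Cauchy--Schwarz is applied twice --- once inside each $t$ to pass from $\left(\sum\|v_j\|\right)^2$ to $\tau\sum\|v_j\|^2$, and once in the double-sum exchange. I would also flag that the estimate is meaningful only when the step size is small enough that $1 - 2l^2\eta^2\mu_2^2\tau^2 > 0$, which is the implicit restriction on $\eta$ (relative to $l$, $\mu_2$, and $\tau$) that this theorem imposes.
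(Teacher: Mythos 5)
Your proposal is correct and follows essentially the same route as the paper's proof: the same decomposition $E[\|v_t\|^2]\le 2E[\|v_t-u_t\|^2]+2E[\|u_t\|^2]$, the same squared delay bound $E[\|v_t-u_t\|^2]\le l^2\eta^2\mu_2^2\tau\sum_{j=D(t)}^{t-1}E[\|v_j\|^2]$ obtained via Cauchy--Schwarz, the same double-sum exchange yielding the second factor of $\tau$, and the same rearrangement. Your write-up is in fact more explicit than the paper's (which leaves the Cauchy--Schwarz step and the sum exchange implicit), and your closing observations --- that the first-moment Lemmas \ref{lemma3} and \ref{lemma4} must be re-derived in squared form, and that the bound is vacuous unless $1-2l^2\eta^2\mu_2^2\tau^2>0$ --- match the restrictions the paper itself imposes after Theorem \ref{main theorem}.
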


\begin{proof}
\begin{align}
E[\|v_{t}\|^{2}] &\leq 2E[\|v_{t}-u_{t}\|^{2}] + 2E[\|u_{t}\|^{2}]\nonumber\\
&\leq 2l^{2} \eta^{2} \mu_{2}^{2}\tau\sum_{j=D(t)}^{t-1}E[\|v_{j}\|^{2}] + 2E[\|u_{t}\|^{2}]\nonumber
\end{align}

Summing from $t=km$ to $ t=km+m-1$, we get
\begin{align}
\sum_{t=km}^{km+m-1}E[\|v_{t}\|^{2}]
&\leq\sum_{t=km}^{km+m-1} 2l^{2}\eta^{2}\mu_{2}^{2}\tau \sum_{j=D(t)}^{t-1}E[\|v_{j}\|^{2}] + 2\sum_{t=km}^{km+m-1}E[\|u_{t}\|^{2}]\nonumber\\
&\leq 2l^{2}\eta^{2}\mu_{2}^{2}\tau^{2} \sum_{t=km}^{km+m-1}E[\|v_{t}\|^{2}] + 2\sum_{t=km}^{km+m-1}E[\|u_{t}\|^{2}]\nonumber
\end{align}
	By rearranging, we get the inequality above.
\end{proof}

\begin{theorem}[Main theorem]\label{main theorem}
Suppose that Assumptions \ref{hyp1} and \ref{hyp2} hold, step size $\eta$ and epoch size $m$ are chosen such that the following condition holds:
\begin{displaymath}
	0 < \theta:=\frac{1+C}
	{1+m\eta\mu\mu_{1}-C} <1, 
\end{displaymath}	
where $~C=\frac{4ml^{2}\eta^{2}\mu_{2}^{2}(l\eta\mu_{1}\tau+1)}{1-2l^{2}\eta^{2}\mu_{2}^{2}\tau^{2}}.$
Then for all $k\geq 0,$ we have
\begin{displaymath}
	E[f(w_{k+1})-f^{*}]\leq\theta E[f(w_{k})-f^{*}].
\end{displaymath}	
\end{theorem}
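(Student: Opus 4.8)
The plan is to carry out the standard variance-reduction energy argument over a single epoch $k$, now threading the preconditioner $H_k$ and the read delay $D(\cdot)$ through every step, and then to telescope. Throughout the epoch $H_t = H_k$ is held fixed and $x_{km} = w_k$. I would take $\|x_t - x^*\|^2$ as the potential and start from the exact one-step identity $\|x_{t+1} - x^*\|^2 = \|x_t - x^*\|^2 - 2\eta\langle H_k v_t, x_t - x^*\rangle + \eta^2\|H_k v_t\|^2$, then take conditional expectation and substitute $E[v_t] = \nabla f(x_{D(t)})$ from the Preliminaries.

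For the linear term I would write $\nabla f(x_{D(t)}) = \nabla f(x_t) + (\nabla f(x_{D(t)}) - \nabla f(x_t))$. The ``ideal'' part $\langle H_k\nabla f(x_t), x_t - x^*\rangle$ I would bound below using $H_k \succeq \mu_1 I$ (Lemma \ref{lemma2}) together with strong convexity (Assumption \ref{hyp1}), producing a descent proportional to $\mu_1(f(x_t) - f^*)$ and to $\mu\mu_1\|x_t - x^*\|^2$; this is the source of the $m\eta\mu\mu_1$ in the denominator of $\theta$. The remaining delay part I would control by Cauchy--Schwarz with $\|H_k\| \le \mu_2$ and Lemma \ref{lemma4}, which converts it into a multiple of $\sum_{j=D(t)}^{t-1} E\|v_j\|$ and ultimately feeds the factor $l\eta\mu_1\tau$ inside $C$. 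For the quadratic term I would use $\|H_k v_t\|^2 \le \mu_2^2\|v_t\|^2$, the source of the ``$+1$'' companion term and of $\mu_2^2$.

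Summing the resulting inequality over $t = km, \dots, km+m-1$ telescopes the potential; discarding the nonnegative $\|x_{km+m}-x^*\|^2$ and converting $\|x_{km}-x^*\|^2 = \|w_k - x^*\|^2 \le \tfrac{2}{\mu}(f(w_k)-f^*)$ by strong convexity leaves everything in terms of $\sum_t E\|v_t\|^2$ and the function gaps. The asynchrony is absorbed through the reuse bound $\sum_t \sum_{j=D(t)}^{t-1}\|v_j\|^2 \le \tau\sum_t\|v_t\|^2$, i.e.\ exactly Theorem \ref{theorem1}, which replaces $\sum_t E\|v_t\|^2$ by $\tfrac{2}{1-2l^2\eta^2\mu_2^2\tau^2}\sum_t E\|u_t\|^2$ and forces the smallness condition $2l^2\eta^2\mu_2^2\tau^2 < 1$; Lemma \ref{lemma6} then turns each $E\|u_t\|^2$ into $4l\,E[f(x_t)-f^*+f(w_k)-f^*]$. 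Collecting the $f(x_t)-f^*$ coefficients on one side, the $f(w_k)-f^*$ coefficients on the other, and invoking convexity of $f$ with the definition of $w_{k+1}$ as the epoch average, so that $E[f(w_{k+1})-f^*] \le \tfrac1m\sum_t E[f(x_t)-f^*]$, yields the claimed ratio $\theta$ with the stated $C$.

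I expect the main obstacle to be the lower bound on the preconditioned descent $\langle H_k\nabla f(x_t), x_t - x^*\rangle$: because $H_k$ is a general symmetric positive-definite matrix, one cannot compare $\langle H_k a, b\rangle$ with $\langle a, b\rangle$ from eigenvalue bounds alone (the symmetric part of a product of two positive-definite matrices need not be positive definite), so legitimately extracting the factor $\mu_1$ while retaining the $\mu$ from strong convexity is the delicate point on which the $m\eta\mu\mu_1$ term rests. The remaining difficulty is purely bookkeeping: propagating the delay estimates of Lemmas \ref{lemma3}--\ref{lemma4} and Theorem \ref{theorem1} through the epoch sum so that every constant lands precisely in $C$ and in the denominator $1 + m\eta\mu\mu_1 - C$, which together with $\theta < 1$ pins down the admissible $(\eta, m)$.
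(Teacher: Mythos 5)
Your outer scaffolding (epoch summation, absorbing the asynchrony via Theorem \ref{theorem1}, converting $E\|u_t\|^2$ into function gaps via Lemma \ref{lemma6}, and using convexity at the epoch average) matches the paper's, but your core one-step inequality takes a genuinely different route --- an iterate-distance potential $\|x_t-x^*\|^2$ instead of the paper's descent-lemma recursion on $f$ --- and that route has a genuine gap, namely exactly the obstacle you flag in your last paragraph and then leave unresolved. The step extracting $\mu_1$ from $\langle H_k\nabla f(x_t),\,x_t-x^*\rangle$ is not ``delicate''; it is false. For symmetric positive definite $H_k$ with $\mu_1 I\preceq H_k\preceq\mu_2 I$, the bilinear form $\langle H_k a,b\rangle$ with $a\neq b$ admits no lower bound of the form $\mu_1\langle a,b\rangle$; it can even be negative while $\langle a,b\rangle>0$. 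Take $H_k=\mathrm{diag}(\mu_1,\mu_2)$ with $\mu_2>2\mu_1$ (which holds for the constants of Lemma \ref{lemma2}), $a=(1,\epsilon)^T$, $b=(1,-\epsilon')^T$ with $\epsilon\epsilon'=2\mu_1/\mu_2$: then $\langle a,b\rangle=1-2\mu_1/\mu_2>0$ while $\langle H_k a,b\rangle=\mu_1-\mu_2\epsilon\epsilon'=-\mu_1<0$. Since $\nabla f(x_t)$ and $x_t-x^*$ are distinct vectors even in the synchronous case $D(t)=t$, there is no delay regime in which your key descent term can be rescued by eigenvalue bounds alone, and with it the entire source of the $m\eta\mu\mu_1$ denominator evaporates. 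The paper avoids pairing the matrix with $x_t-x^*$ altogether: it starts from smoothness, $f(x_{t+1})\le f(x_t)+\nabla f(x_t)^T(x_{t+1}-x_t)+\frac{l}{2}\|x_{t+1}-x_t\|^2$, so that $H_k$ meets only gradient-type vectors (in the zero-delay limit its bound reduces to the quadratic form $\nabla f^T H_k\nabla f\ge\mu_1\|\nabla f\|^2$, which is legitimate, whereas your step never reduces to a quadratic form), and it handles the delay mismatch in the cross term not by Cauchy--Schwarz on $u_t-v_t$ but by the polarization identity $-a^Tb=\frac{1}{2}\|a-b\|^2-\frac{1}{2}\|a\|^2-\frac{1}{2}\|b\|^2$ applied to $a=\nabla f(x_t)$, $b=\nabla f(x_{D(t)})$, followed by Lipschitzness and the gradient-dominance bound of Lemma \ref{lemma5}; this is what produces the precise constants $C$ and $m\eta\mu\mu_1$ of Theorem \ref{main theorem}.

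If you want to salvage the distance-based route, the standard repair is to measure distance in the metric induced by $H_k^{-1}$: since $H_k$ is held fixed throughout epoch $k$ and Lemma \ref{lemma2} bounds its spectrum, the potential $\|x_t-x^*\|_{H_k^{-1}}^2$ obeys a one-step identity whose cross term is $-2\eta\langle v_t,\,x_t-x^*\rangle$, free of $H_k$, and eigenvalue bounds are then needed only on quadratic terms where they are valid. But this changes the bookkeeping throughout (weighted norms drag extra factors of $\mu_1,\mu_2$ into every term), so your concluding claim that the computation ``yields the claimed ratio $\theta$ with the stated $C$'' is asserted rather than established, and with the repaired potential it would almost certainly produce different constants than the theorem states. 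As written, the proposal correctly diagnoses its own fatal step but does not close it, so it does not constitute a proof.
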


\begin{proof}
Let's start from the Lipschitz continuous condition:
\begin{align}
f(x_{t+1})&\leq f(x_{t})+\nabla f(x_{t})(x_{t+1}-x_{t})+\frac{l}{2}\|x_{t+1}-x_{t}\|^{2}\nonumber\\
&=f(x_{t})-\eta H_{r}\nabla f(x_{t})v_{t}+\frac{l\eta^{2}H_{r}^{2}}{2}\|v_{t}\|^{2}\nonumber\\
&\leq f(x_{t})-\eta\mu_{1}\nabla f(x_{t})v_{t}+\frac{l\eta^{2}\mu_{2}^{2}}{2}\|v_{t}\|^{2}\nonumber
\end{align}
Taking expectation at both hands,
\begin{displaymath}
E[f(x_{t+1})]\leq E[f(x_{t})]-\eta\mu_{1}E[u_{t}]E[v_{t}]+\frac{l\eta^{2}\mu_{2}^{2}}{2}E[\|v_{t}\|^{2}]
\end{displaymath}
Given $ab=\frac{(a+b)^{2}-(a-b)^{2}}{4}$, we can derive: $ -ab=\frac{1}{2}(a-b)^{2}-\frac{1}{2}a^{2}-\frac{1}{2}b^{2}$. Thus,
\begin{align}
&-E[u_{t}]E[v_{t}]\nonumber \\
&=\frac{1}{2}(E[u_{t}]-E[v_{t}])^{2}-\frac{1}{2}(E[u_{t}])^{2}-\frac{1}{2}(E[v_{t}])^{2}\nonumber\\
&= \frac{1}{2}(\nabla f(x_{t})-\nabla f(x_{D(t)}))^{2}-\frac{1}{2}(\nabla f(x_{t}))^{2}-\frac{1}{2}(\nabla f(x_{D(t)}))^{2}\nonumber\\
&\leq\frac{l^{2}}{2}\|x_{t}-x_{D(t)}\|^{2}-\mu (f(x_{t})-f^{*})-\mu (f(x_{D(t)})-f^{*})\nonumber\\
&=\frac{l^{2}\eta^{2}\mu_{2}^{2}}{2}\sum_{j=D(t)}^{t-1}E[\|v_{j}\|^{2}]-\mu (f(x_{t})-f^{*})-\mu (f(x_{D(t)})-f^{*})\nonumber
\end{align}
The first inequality comes from Lipschitz continuous condition and Lemma \ref{lemma5}.

Substituting this in the inequality above, we get the following result:	
\begin{align}
&E[f(x_{t+1})]\nonumber \\
&\leq E[f(x_{t})]+\eta\mu_{1}(\frac{l^{2}\eta^{2}\mu_{2}^{2}}{2}\sum_{j=D(t)}^{t-1}E[\|v_{j}\|^{2}]-\mu (f(x_{t})-f^{*})\nonumber\\
&-\mu (f(x_{D(t)})-f^{*}))+\frac{l\eta^{2}\mu_{2}^{2}}{2}E[\|v_{t}\|^{2}]\nonumber\\
&\leq E[f(x_{t})]+\frac{l^{2}\eta^{3}\mu_{2}^{2}\mu_{1}}{2}\sum_{j=D(t)}^{t-1}E[\|v_{j}\|^{2}]-\eta\mu\mu_{1}E[f(x_{t})-f^{*}]\nonumber\\
&-\eta\mu\mu_{1}E[f(x_{D(t)})-f^{*}]+\frac{l\eta^{2}\mu_{2}^{2}}{2}E[\|v_{t}\|^{2}]\nonumber\\
&\leq E[f(x_{t})]+\frac{l^{2}\eta^{3}\mu_{2}^{2}\mu_{1}}{2}\sum_{j=D(t)}^{t-1}E[\|v_{j}\|^{2}]
-\eta\mu\mu_{1}E[f(x_{t})-f^{*}]\nonumber\\
&+\frac{l\eta^{2}\mu_{2}^{2}}{2}E[\|v_{t}\|^{2}]\nonumber
\end{align}
The last inequality comes from the fact that $f^{*}=min f(x), ~~f(x_{D(t)})-f^{*}\geq 0. $
	
Summing from $t=km$ to $ t=km+m-1$, we get
\begin{align}
&\sum_{t=km}^{km+m-1}E[f(x_{t+1})-f(x_{t})]\nonumber\\
&\leq\frac{l^{2}\eta^{3}\mu_{2}^{2}\mu_{1}}{2}\sum_{t=km}^{km+m-1}\sum_{j=D(t)}^{t-1}E[\|v_{j}\|^{2}]+\frac{l\eta^{2}\mu_{2}^{2}}{2}\sum_{t=km}^{km+m-1}E[\|v_{t}\|^{2}]\nonumber\\
&-\eta\mu\mu_{1}\sum_{t=km}^{km+m-1}E[f(x_{t})-f^{*}]\nonumber\\
&\leq\frac{l^{2}\eta^{3}\mu_{2}^{2}\mu_{1}\tau}{2}\sum_{t=km}^{km+m-1}E[\|v_{t}\|^{2}]+\frac{l\eta^{2}\mu_{2}^{2}}{2}\sum_{t=km}^{km+m-1}E[\|v_{t}\|^{2}]\nonumber\\
&-\eta\mu\mu_{1}mE[f(w_{k+1})-f^{*}]\nonumber\\
\end{align}
\begin{align}
&=\frac{l\eta^{2}\mu_{2}^{2}(l\eta\mu_{1}\tau+1)}{2}\sum_{t=km}^{km+m-1}E[\|v_{t}\|^{2}]-\eta\mu\mu_{1}mE[f(w_{k+1})-f^{*}]\nonumber\\
&\leq\frac{l\eta^{2}\mu_{2}^{2}(l\eta\mu_{1}\tau+1)}{2}\cdot\frac{2}{1-2l^{2}\eta^{2}\mu_{2}^{2}\tau^{2}}\sum_{t=km}^{km+m-1}E[\|u_{t}\|^{2}] \nonumber\\
&-\eta\mu\mu_{1}mE[f(w_{k+1})-f^{*}]\nonumber\\
&=\frac{l\eta^{2}\mu_{2}^{2}(l\eta\mu_{1}\tau+1)}{1-2l^{2}\eta^{2}\mu_{2}^{2}\tau^{2}}\sum_{t=km}^{km+m-1}E[\|u_{t}\|^{2}]-\eta\mu\mu_{1}mE[f(w_{k+1})-f^{*}]\nonumber\\
&\leq\frac{l\eta^{2}\mu_{2}^{2}(l\eta\mu_{1}\tau+1)}{1-2l^{2}\eta^{2}\mu_{2}^{2}\tau^{2}}\sum_{t=km}^{km+m-1}4lE[f(x_{t})-f^{*}+f(w_{k})-f^{*}]\nonumber\\
&-\eta\mu\mu_{1}mE[f(w_{k+1})-f^{*}]\nonumber\\
&\leq(\frac{4ml^{2}\eta^{2}\mu_{2}^{2}(l\eta\mu_{1}\tau+1)}{1-2l^{2}\eta^{2}\mu_{2}^{2}\tau^{2}}-m\eta\mu\mu_{1})E[f(w_{k+1})-f^{*}]\nonumber\\
&+\frac{4l^{2}\eta^{2}\mu_{2}^{2}(l\eta\mu_{1}\tau+1)}{1-2l^{2}\eta^{2}\mu_{2}^{2}\tau^{2}}E[f(w_{k})-f^{*}]\nonumber
\end{align}
The second inequality uses the assumption that we have a delay $\tau $ in one epoch iteration. The third inequality uses the results of Theorem \ref{theorem1}, hence all delayed stochastic VR gradients can be bounded by the stochastic VR gradients. The fourth inequality holds for Lemma \ref{lemma6}. In the epoch-based setting,  
\begin{align*}
\sum_{t=km}^{km+m-1}E[f(x_{t+1})-f(x_{t})]&= E[f(x_{(k+1)m})-f(x_{km})]\\
&=E[f(w_{k+1})-f(w_{k})].
\end{align*}
Thus the above inequality gives
\begin{align}
&E[f(w_{k+1})-f^{*}]\nonumber \\
&\leq E[f(w_{k})-f^{*}]+\frac{4l^{2}\eta^{2}\mu_{2}^{2}(l\eta\mu_{1}\tau+1)}{1-2l^{2}\eta^{2}\mu_{2}^{2}\tau^{2}}\sum_{t=km}^{km+m-1}E[f(w_{k})-f^{*}]\nonumber\\
&-(m\eta\mu\mu_{1}-\frac{4ml^{2}\eta^{2}\mu_{2}^{2}(l\eta\mu_{1}\tau+1)}{1-2l^{2}\eta^{2}\mu_{2}^{2}\tau^{2}})E[f(w_{k+1})-f^{*}].\nonumber
\end{align}
By rearranging the above gives
\begin{align}
(1+m\eta\mu\mu_{1}-\frac{4ml^{2}\eta^{2}\mu_{2}^{2}(l\eta\mu_{1}\tau+1)}{1-2l^{2}\eta^{2}\mu_{2}^{2}\tau^{2}})E[f(w_{k+1})-f^{*}]\nonumber\\
\leq(1+\frac{4ml^{2}\eta^{2}\mu_{2}^{2}(l\eta\mu_{1}\tau+1)}{1-2l^{2}\eta^{2}\mu_{2}^{2}\tau^{2}})E[f(w_{k})-f^{*}].\nonumber
\end{align}
Hence, we get the following result:
\begin{displaymath}
E[f(w_{k+1}-f^{*})]\leq\theta E[f(w_{k}-f^{*})],
\end{displaymath}
where $\theta=\frac{1+C}{1+m\eta\mu\mu_{1}-C},~C=\frac{4ml^{2}\eta^{2}\mu_{2}^{2}(l\eta\mu_{1}\tau+1)}{1-2l^{2}\eta^{2}\mu_{2}^{2}\tau^{2}}.$  
\end{proof}

From Theorem \ref{main theorem} we can see that $\eta, \tau$ should satisfy condition $1-2l^{2}\eta^{2}\mu_{2}^{2}\tau^{2}>0$ to ensure the result holds. Therefore, we reach an upper bound of the step size: $\eta<\frac{1}{\sqrt{2}l\mu_{2}\tau}$. By substituting the result of Lemma \ref{lemma2}, we can easily derive  $l\mu_{2}\tau=l\frac{((d+M)l)^{d+M-1}}{\mu^{d+M}}\tau=(d+M)^{d+M-1}(\frac{l}{\mu})^{d+M}\tau>1$.

To ensure that $\theta=\frac{1+C}{1+m\eta\mu\mu_{1}-C} \in (0,1)$, we require $m\eta\mu\mu_{1}>2C$, where $C > 0$, and this requirement can be written equivalently in the following way:
\[
(1-2l^{2}\eta^{2}\mu_{2}^{2}\tau^{2})m\eta\mu\mu_{1} > 8ml^{2}\eta^{2}\mu_{2}^{2}(l\eta\mu_{1}\tau+1).
\]
We can treat the above inequality as quadratic equation with respect to step size $\eta$,
\[
2l^{2}\mu_{1}\mu_{2}^{2}\tau(4l+\mu\tau)\eta^{2}+8l^{2}\mu_{2}^{2}\eta-\mu\mu_{1} <0,
\]
and it can be checked that when $\eta \in (0, \frac{\sqrt{16l^{2}\mu^{2}_{2}+2\mu_{1}^{2}\mu\tau(4l+\mu\tau )}-4l\mu_{2}}{2l\mu_{1}\mu_{2}\tau(4l+\mu\tau)})$, the inequality will be satisfied. 
Noticing that this range falls into the previous restriction $\eta<\frac{1}{\sqrt{2}l\mu_{2}\tau}$, then we get a more accurate bound of step size to ensure the convergence rate $\theta$ falls in range $(0,1)$. For simplicity, we set the step size parameter $\eta=\frac{1}{2l\mu_2 m}$, it is easy to check that the quadractic inequality above will be certainly satisfied.

\begin{corollary} \label{cor}
Suppose that the conditions in Theorem \ref{main theorem} hold and for simplicity we set parameters as step size $\eta=\frac{1}{2l\mu_2 m}$ and the maximum delay parameter $\tau = L < m$. If $m>8\kappa(B)\kappa(H)+4\kappa(B)$, then the Asynchronous parallel SQN will have the following convergence rate:
\begin{displaymath}
	E[f(w_{k+1})-f^{*}]\leq \frac{(m+2)\kappa(B)\kappa(H)+\kappa (B)}{(m-2)\kappa(B)\kappa(H)-\kappa (B)+\frac{m}{2}} E[f(w_{k})-f^{*}].
\end{displaymath}	
\end{corollary}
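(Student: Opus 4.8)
The plan is to substitute the chosen values $\eta = \frac{1}{2l\mu_2 m}$ and $\tau = L$ directly into the expression for $\theta$ from Theorem \ref{main theorem} and then rewrite every quantity in terms of the two condition numbers $\kappa(B) = \frac{l}{\mu}$ and $\kappa(H) = \frac{\mu_2}{\mu_1}$. First I would evaluate the recurring group $l^{2}\eta^{2}\mu_2^{2}$, which collapses to $\frac{1}{4m^{2}}$, so that $4ml^{2}\eta^{2}\mu_2^{2} = \frac{1}{m}$ and $2l^{2}\eta^{2}\mu_2^{2}\tau^{2} = \frac{\tau^{2}}{2m^{2}}$. Since $\tau = L < m$ gives $\frac{\tau^{2}}{2m^{2}} < \frac{1}{2}$, the denominator $1 - 2l^{2}\eta^{2}\mu_2^{2}\tau^{2}$ appearing in $C$ is bounded below by $\frac{1}{2}$ and in particular stays positive, so the restriction $\eta < \frac{1}{\sqrt{2}l\mu_2\tau}$ noted after the theorem is respected.

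Next I would compute the two building blocks of $\theta$ in closed form. The linear term simplifies cleanly: $m\eta\mu\mu_1 = \frac{\mu\mu_1}{2l\mu_2} = \frac{1}{2\kappa(B)\kappa(H)}$. For $C$, using $l\eta\mu_1\tau = \frac{\tau}{2m\kappa(H)}$ together with the groups above, I would obtain the exact expression
\[
C = \frac{\tau + 2m\kappa(H)}{\kappa(H)\,(2m^{2} - \tau^{2})}.
\]
Then I would replace $C$ by an explicit upper bound, which is legitimate because $\theta = \frac{1+C}{1 + m\eta\mu\mu_1 - C}$ is strictly increasing in $C$ on the region where its denominator is positive (its derivative is $\frac{1 + m\eta\mu\mu_1}{(1 + m\eta\mu\mu_1 - C)^{2}} > 0$). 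Using $\tau < m$ to enlarge the numerator, via $\tau + 2m\kappa(H) < m(1+2\kappa(H))$, and to shrink the denominator of $C$ from below, via $2m^{2} - \tau^{2} > m^{2}$, I get $C < \frac{1+2\kappa(H)}{\kappa(H)\,m}$. Substituting this bound and $m\eta\mu\mu_1 = \frac{1}{2\kappa(B)\kappa(H)}$ into $\theta$, clearing denominators by multiplying top and bottom by $2\kappa(B)\kappa(H)m$, and collecting the terms in $\kappa(B)\kappa(H)$ should produce exactly
\[
\frac{(m+2)\kappa(B)\kappa(H)+\kappa(B)}{(m-2)\kappa(B)\kappa(H)-\kappa(B)+\frac{m}{2}},
\]
which is the claimed rate.

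Finally I would confirm that this ratio is genuinely a contraction, i.e. that it lies in $(0,1)$: comparing numerator and denominator, the inequality that the rate be below $1$ is equivalent to $4\kappa(B)\kappa(H) + 2\kappa(B) < \frac{m}{2}$, that is $m > 8\kappa(B)\kappa(H) + 4\kappa(B)$, which is precisely the hypothesis of the corollary and which simultaneously forces the denominator $(m-2)\kappa(B)\kappa(H)-\kappa(B)+\frac{m}{2}$ to be strictly positive. I expect the only delicate point to be the bookkeeping: ensuring that both uses of $\tau < m$ push $C$ \emph{upward} (so the substitution yields a valid upper bound on the monotone quantity $\theta$), and verifying that $1 + m\eta\mu\mu_1 - C$ remains positive once the upper bound on $C$ is inserted, which is exactly what the stated condition on $m$ secures. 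Everything else reduces to routine algebraic simplification.
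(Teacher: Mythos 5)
Your proposal is correct and takes essentially the same route as the paper's own proof: substitute $\eta=\frac{1}{2l\mu_2 m}$ and $\tau=L<m$, bound $C$ above by $\frac{1+2\kappa(H)}{m\,\kappa(H)}$ (identical to the paper's bound $\frac{\frac{1}{\kappa(H)}+2}{m}$), use monotonicity of $\theta$ in $C$, and clear denominators to obtain the stated rate and the condition $m>8\kappa(B)\kappa(H)+4\kappa(B)$; your extra care in checking that the substituted upper bound keeps $1+m\eta\mu\mu_1-C$ positive is a point the paper leaves implicit. One trivial slip: the derivative of $\theta(C)=\frac{1+C}{1+m\eta\mu\mu_1-C}$ is $\frac{2+m\eta\mu\mu_1}{(1+m\eta\mu\mu_1-C)^{2}}$, not $\frac{1+m\eta\mu\mu_1}{(1+m\eta\mu\mu_1-C)^{2}}$, but since both are positive your monotonicity conclusion is unaffected.
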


\begin{proof}
We first check that when $\eta=\frac{1}{2l\mu_2 m}$, 
\begin{align}
C&=\frac{4ml^{2}\eta^{2}\mu_{2}^{2}(l\eta\mu_{1}\tau+1)}{1-2l^{2}\eta^{2}\mu_{2}^{2}\tau^{2}} \nonumber\\
&=\frac{\frac{\mu_1}{2\mu_2}\frac{L}{m} + 1}{m (1-\frac{1}{2}(\frac{L}{m})^2)}\nonumber\\
&< \frac{\frac{1}{\kappa(H)} + 2}{m}.\nonumber
\end{align}
Then we get a simplified $C$, which is still positive. The convergence rate will correspondingly become: 
\begin{align}
\theta &=\frac{1+C}{1+m \eta \mu \mu_{1} -C}\nonumber\\
&<\frac{1+\frac{\frac{1}{\kappa(H)} + 2}{m}}{1+\frac{1}{2\kappa(B)\kappa(H)} -\frac{\frac{1}{\kappa(H)} + 2}{m}}\nonumber\\
&=\frac{(m+2)\kappa(B)\kappa(H)+\kappa (B)}{(m-2)\kappa(B)\kappa(H)-\kappa (B)+\frac{m}{2}}.\nonumber
\end{align}
When $m>2,$ $\theta >0$ can be guaranteed. Further, to ensure convergence rate $\theta<1$, $m> 8\kappa(B)\kappa(H)+4\kappa(B)$.
\end{proof}

\begin{remark}[Step size]
	As the step size  $\eta$ diminishing to $0,$ $C \to 0,$ and $\theta \to 1$, which means the convergence rate becomes slower as the step size getting smaller. Decayed step size makes it easier to converge, however it is not the best choice for majority algorithms, and we use constant step size here.	
\end{remark}
	
It is well known that the deterministic quasi-Newton methods (e.g. L-BFGS) enjoy a superlinear convergence rate, while the stochastic version of quasi-Newton methods (including L-BFGS) will have a sublinear convergence rate in strongly convex optimization as a sacrifice; with the help of variance reduction technique, SQN-VR  reaches a linear convergence rate.  Compared with the original SQN-VR, even equipped with asynchronous setting, our method still achieves a linear convergence rate. The delay parameter $\tau$ in asynchronous setting affects the linear speed $\theta$ and more details are given as follows.

\begin{remark}[Asynchronous delay]
	If we assume that the delay parameter $\tau$ diminishes to $0,$ the convergence rate corresponds to the stochastic parallel one. SQN-VR \cite{moritz2016linearly} reaches a linear convergence rate, which is the upper bound since sequential algorithms have no delay in updating. Further, for synchronous parallel SQN, the convergence rate will not be changed.  However, once the delay is introduced by asynchronous, the convergence rate will certainly be impaired, even not converge. Our main theorem has proved that even with asynchronous updating scheme, the convergence rate still can reach linear. In the worst asynchronous case, i.e. the delay is maximized to be epoch size, Corollary \ref{cor} shows the linear convergence rate.
\end{remark}	

\begin{remark}[Affine invariance]
	It is known that second-order methods have the property that independent of linear scaling,  which is not true for gradient descent as it owns a rate with exponential relationship with condition number. For stochastic quasi-Newton methds, the convergence rate in references \cite{byrd2016stochastic,moritz2016linearly} did not show that SQN keeps the affine invariance. In our analysis, Theorem \ref{main theorem} shows that $\theta = O(\frac{1+C}{1+\frac{1}{\kappa(B)}-C})$, where $C =O(\frac{1}{\kappa(H)})$ when fixing all other parameters, specifically $m, d, M, \tau$. In the later Corollary \ref{cor} analysis, when $m$ is chosen large enough than $8\kappa(B)\kappa(H)+4\kappa(B)$, the linear convergence rate $\theta\approx \frac{m}{m+m/2}=\frac{2}{3}.$ Then we get the conclusion that $\theta$ doesn't depend on the condition number, our theoretical result verifies the affine invariance together with empirical study. 
\end{remark}

\begin{remark}[Limitation on subsample T]
	During the stochastic Hessian inverse construction, if the subsample $T$ goes to zero, no more curvature information will be captured, SQN will degrade to first-order gradient descent method; if $T$ goes to the entire data $n$, SQN has the whole costly Hessian inverse approximation, then SQN will not be so efficient. As a balance, subsample $T$ should be chosen as a moderately large, independent subset of data. In our experiments, we choose the size of $T$ to be 10 times the size of subsample $S$.
\end{remark}

\section{Empirical Study}\label{empirical}
  In this section, we compare the performance of our designed algorithm AsySQN with the state-of-the-art SQN-VR method  \cite{moritz2016linearly}  and distributed quasi-Newton method named DAve-QN \cite{soori2020dave} to show the enhancement raised by asynchronous parallel setting. More formally, we also make comparisons with classical stochastic gradient descent (SGD) \cite{zinkevich2010parallelized} and stochastic variance-reduced gradient method (SVRG) \cite{johnson2013accelerating} and their asynchronous versions \cite{recht2011hogwild,reddi2015variance} to give some insights of  first-order gradient methods and second-order quasi-Newton methods.
  We employ a machine with 32GB main memory and 8 Intel Xeon CPU E5-2667 v3 @ 3.2 GHz processors. The experiments have been performed on both synthetic and real-world datasets, with different numbers of data points and dimensions.

\textbf{Datapass analysis.} We use the number of times the whole dataset will be visited, or called datapass, as performance measure of the convergence speed.
This is because it is independent of the actual implementation of the algorithms, and has been a well-established convention in the literature on both stochastic first-order and second-order methods\cite{johnson2013accelerating,defazio2014saga,harikandeh2015stopwasting,byrd2016stochastic,moritz2016linearly,gower2016stochastic}.

For each epoch, the SVRG and AsySVRG algorithms visit the whole dataset twice (datapass $= 2$): one for updating parameter $x$ and the other is for calculating average gradient. For SQN-VR and AsySQN, each epoch will take $2 + \frac{b_H}{b\times L \times P}$ datapasses, where $P$ is the number of threads. The extra datapass of AsySQN is introduced by Hessian-vector product calculations.

\textbf{Speedup analysis.} We compare stochastic quasi-Newton methods (SQN-VR) and its asynchronous parallel version (AsySQN) in terms of speedup to show the advantage of taking asynchronous parallel strategy. 
Here we consider the stopping criteria: $f( w ) - f^* < \epsilon$, where $f^*$ has been calculated before, we require high-precision optimum as error $\epsilon = 10^{-30}$. The speedup results for simulation and real datasets are shown in Figures \ref{simulation_d20d200} and \ref{libsvm_data}, respectively.

As we require no more on sparsity and Hessian matrix is always dense, lock has been added when updating parameter $x$ in shared memory, that is the reason why we have extra waiting time and have not reached the ideal linear speedup. However, obvious speedup still can be captured. Also, we compare with the lock-version of asynchronous first-order methods for fairness.

\textbf{CPU time analysis.} We also include the real clock time when making comparisons within second-order methods, as shown in Figures \ref{simulation_d2_time} and \ref{libsvm_data_time}. The stopping criteria and precision settings are the same with speedup analysis.

\textbf{Experiment setup.}  We conduct two simulations (moderate-dimensional I and high-dimensional II under ill-conditioning case) using synthetic datasets and three real datasets (Real$\_$sim, MNIST, RCV1) evaluations to show the comparisons of our methods against competitors. For all our experiments, the batch size $b$ is set to be $ 5, 10, 20, 50 $ and the Hessian batch size $b_H$ is set to be $10 b$. The limited memory size $M = 10 $ in both SQN-VR and AsySQN. After testing experiments in SQN-VR, we select $P\times L $ as roughly a preferable constant for each dataset, which means a smaller $L$ is preferred when more threads are used. For all the algorithms, a constant step size is chosen via grid search. All experiments are initialized randomly except simulation I since we want to check the trace of algorithms in simulation I, see Figure \ref{trace_for_ill_condi_d2_2nd}. All experiments are performed with 8 cores in the parallel setting.

\begin{figure}[t]
	\begin{center}
		\centerline{\includegraphics[width=\columnwidth]{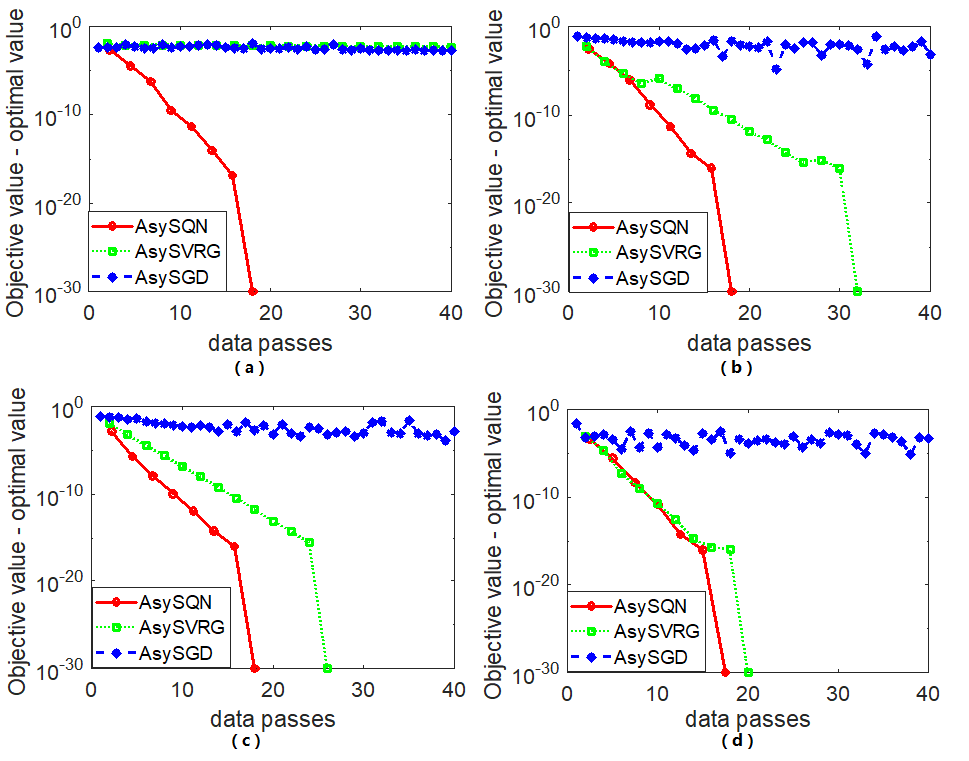}}
		\caption{Under various degrees of ill-conditioning least square problems: (a) extreme ill- conditioning; (b) ill-conditioning; (c) moderate ill-conditioning; (d) well-behaved.}
		\label{simulation_d2_datapass}
	\end{center}
\end{figure} 

\begin{figure}[t]
	\begin{center}
		\centerline{\includegraphics[width=\columnwidth]{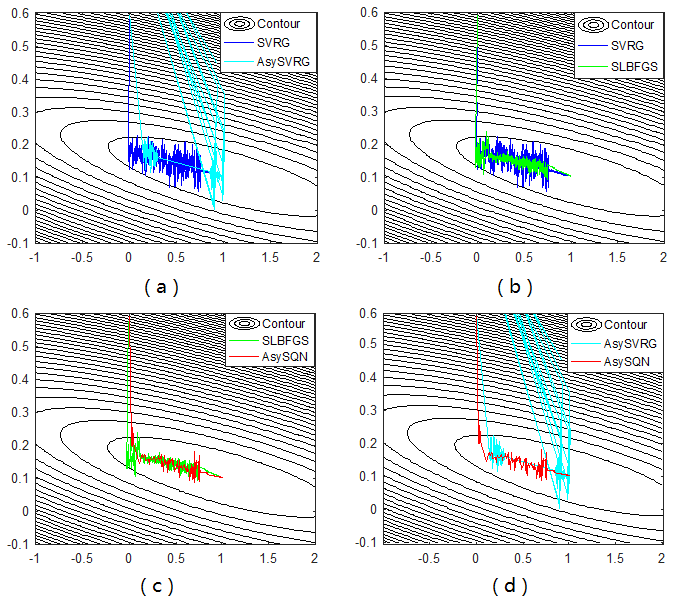}}
		\caption{Explore the results in ill-conditioning (same setting as Figure \ref{simulation_d2_datapass} (b)), demonstrates the stability of AsySQN. AsySQN can still quickly drop down to the optimum even though with little perturbation due to the asynchronous updates; while the AsySVRG methods have obvious jumping compared with SVRG due to the asynchronous setting.
		}
		\label{trace_for_ill_condi_d2_2nd}
	\end{center}
\end{figure}
\begin{figure}[t]
	\begin{center}
		\centerline{\includegraphics[width=\columnwidth]{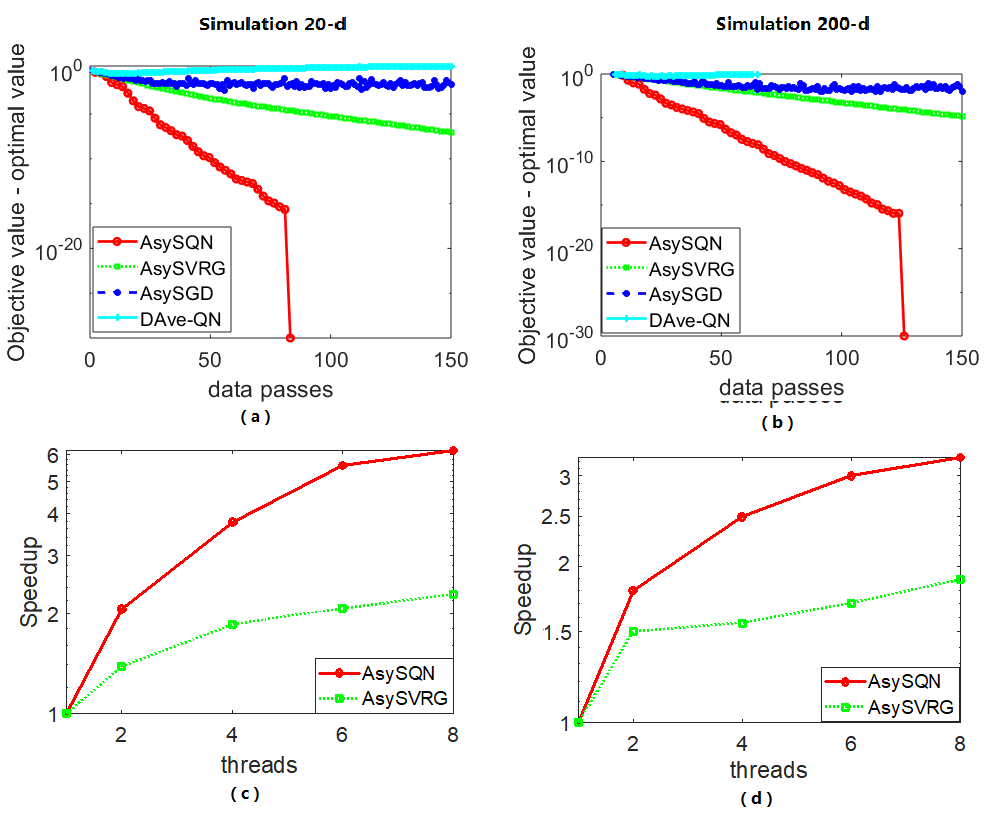}}
		\caption{Comparisons of different asynchronous algorithms in 20-D and 200-D simulation datasets.}
		\label{simulation_d20d200}
	\end{center}
\end{figure}

\begin{figure}[t]
	\begin{center}
		\centerline{\includegraphics[width=\columnwidth]{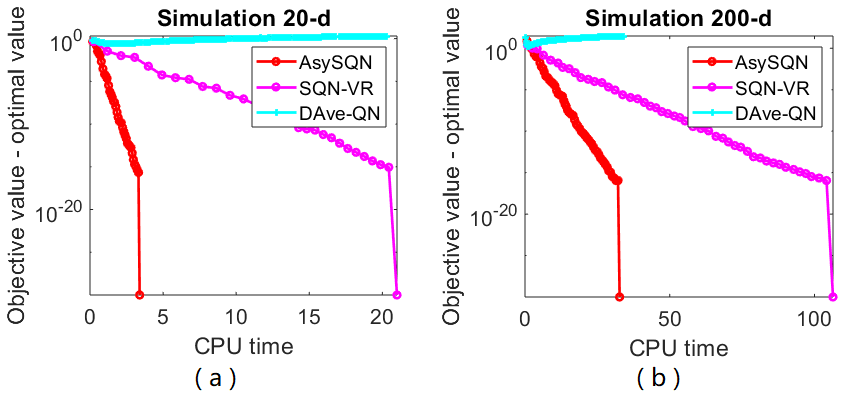}}
		\caption{CPU time of SQN-VR, its asynchronous version AsySQN, and distributed DAve-QN in 20-D and 200-D simulation datasets.}
		\label{simulation_d2_time}
	\end{center}
\end{figure}

\subsection{Experiments using synthetic datasets }

In order to explore some properties of our algorithm, simulation I and simulation II are performed for the least square problem:
\begin{equation}
\min_{x \in \mathbb{R}^{d}}\frac{1}{n}\sum_{i=1}^{n}(y_i-z_i^Tx)^2.
\end{equation}

The goal of simulation I is to study that under what circumstance our designed algorithm AsySQN will be preferred compared with first-order methods (AsySGD and AsySVRG). Various degrees of ill-conditioning least square problems are generated in the following procedures: $z_{1}$ and $z_{2}$ are two features generated uniformly from the interval $[0, 1]$. We then label $y = a\times z_{1} + b\times z_{2} + \epsilon $, where $ \epsilon$ satisfies normal distribution with mean $0$ and standard deviation $1$. To control the degree of ill-conditioning, we use four different settings of $(a,b)$ pairs: $(0.1, 10), (1, 10 ),  (1, 5)$ and $(1, 1)$, which correspond to extreme ill-conditioned, ill-conditioned, moderate ill-conditioned and well-behaved least squared problems, respectively.

\begin{figure*}
	\begin{center}
		\centerline{\includegraphics[width=0.8\linewidth]{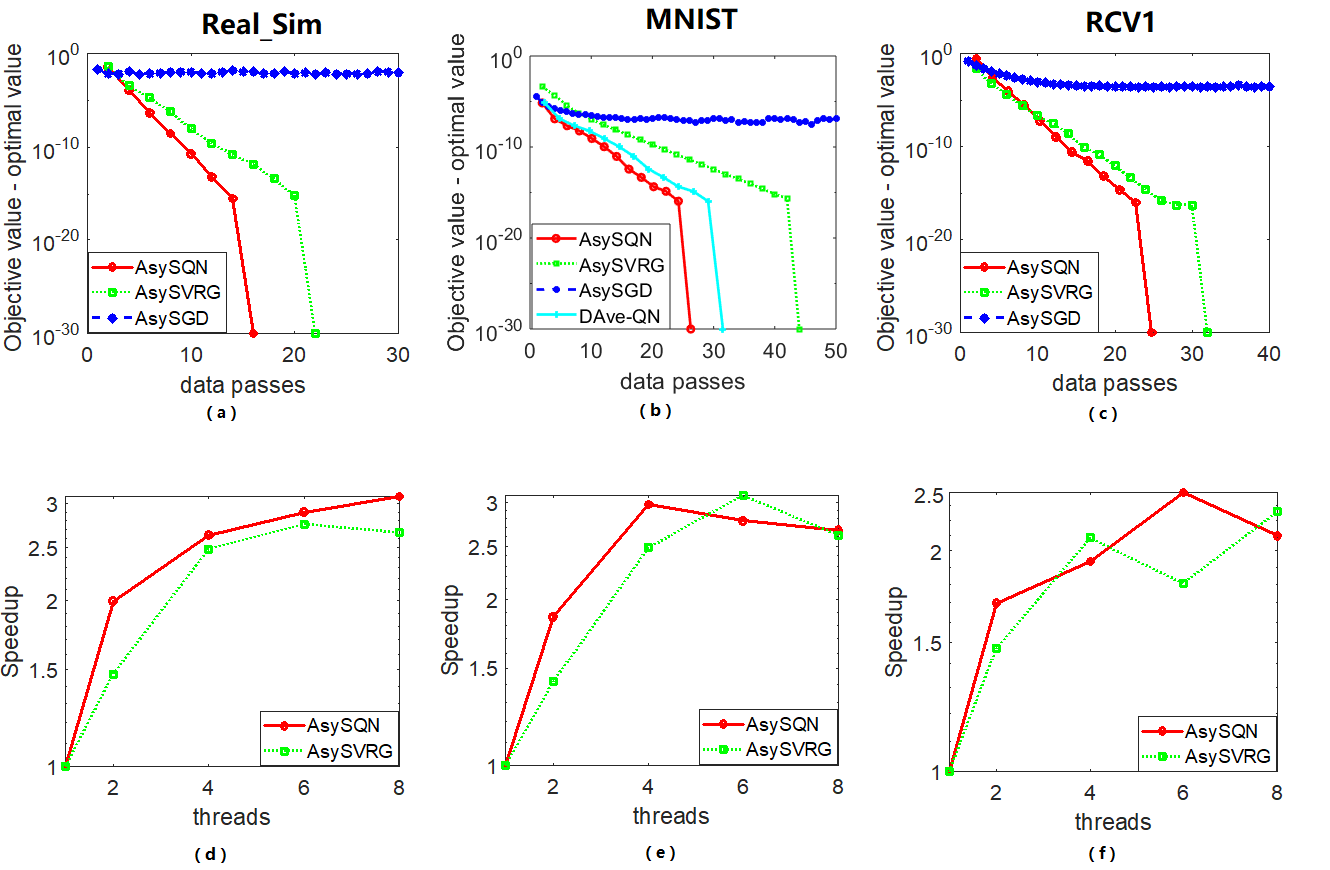}}
		\caption{Datapasses and speedup of logistic regression with Real$\_$sim dataset (a)(d) , MNIST dataset (b)(e) and SVM with RCV1 dataset (c)(f).}
		\label{libsvm_data}
	\end{center}
\end{figure*}

\begin{figure*}
	\begin{center}
		\centerline{\includegraphics[width=0.8\linewidth]{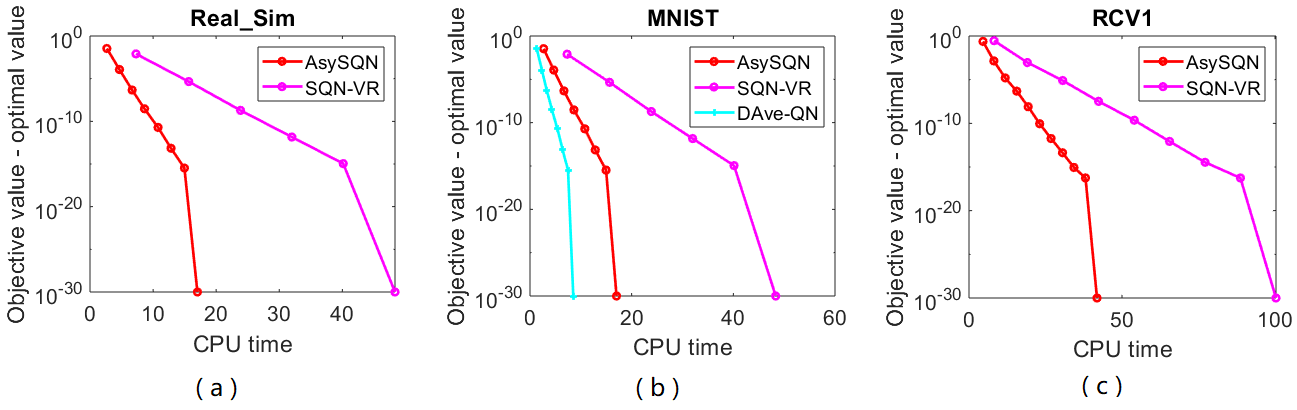}}
		\caption{CPU time of logistic regression with Real$\_$sim dataset (a), MNIST dataset (b) and SVM with RCV1 dataset (c).}
		\label{libsvm_data_time}
	\end{center}
\end{figure*}

From Figure \ref{simulation_d2_datapass}, we can see in extreme ill-conditioned case, AsySQN performs significantly better than first-order methods, it could achieve high precision solutions in short datapasses, where AsySVRG and AsySGD can barely solve the problem. In (moderate) ill-conditioned cases, AsySGD still performs bad, AsySVRG and AsySQN can linearly go to the optimum while AsySQN is faster in terms of datapasses. Even in well-behaved optimization, AsySQN is still comparable with AsySVRG. From (a)-(d), we can clearly conclude that the performance of AsySQN is more stable and verifies the affine invariance property.

To see the details, we plot the trace of AsySQN and AsySVRG respectively\footnote{AsySVRG is known to outperform AsySGD, and we don't include the details of AsySGD here.}.  Figure \ref{trace_for_ill_condi_d2_2nd} carefully explores the results showing in Figure \ref{simulation_d2_datapass}(b), in ill-conditioning optimization, how the trace information reveal the performance difference. 
We first study the well-known first-order method, SVRG, and its corresponding asynchronous version AsySVRG in Figure 3(a). It is clear to see that asynchronous setting results in wild jumps around the global minimum. However, second-order methods are well-known to be stable and we can see stochastic second-order method performs much more stable than SVRG in Figure 3(b). And even equipped with an asynchronous setting, AsySQN still exhibits good performance in terms of stability of convergence to the global optimal as shown in Figures 3(c,d).  In Figure 3(d), the side-by-side comparison between AsySVRG and AsySQN, AsySVRG has wild jumping around the global minimum.  In other words, AsySQN avoids the jumping which is a hard burden for lending asynchronous implementation directly to first-order methods.  Hence, we conclude that AsySQN outperforms AsySVRG under ill-conditioned circumstance.

In practice, it is appealing to solve machine learning problems over high dimensional datasets. Simulation II is performed where the dimensions of ill-conditioned simulation datasets are $(10^4, 20)$ and $(10^4, 200)$, respectively. AsySGD and AsySVRG are hard to quickly achieve a high precision solution, they cost almost 200 datapasses to approximate the optimal value within a precision of $10^{-4}$. DAve-QN method relies highly on the condition number of objective functions and it performs bad in our ill-conditioned simulations, which confirms their theoretical analysis. In contrast, AsySQN can achieve the high precision solution ($10^{-30}$) within finite datapasses, showing better performance with ill-conditioned datasets.  Further, AsySQN has an obvious speedup compared with synchronous SQN-VR methods in dealing with high dimensional data. See Figure \ref{simulation_d20d200}. We also include the speedup of AsySVRG as a reference.  When the number of threads is $1$, it is actually the original non-parallel version of stochastic L-BFGS method (SQN-VR in our experiment). The CPU time comparisons within second-order methods: the base line method SQN-VR, our designed method AsySQN, and the-state-of-the-art distributed quasi-Newton method DAve-QN, are shown in Figure \ref{simulation_d2_time}. AsySQN costs the least amount of time to reach a high precision solution, and shows obvious speedup compared with SQN-VR. In summary, we can get the conclusion that our designed algorithm scales very well with the problem size, and actually improve the speedup in dealing with high dimensional optimization problems.

\subsection{Experiments using real-world datasets}

  Three datasets have been used for real-dataset evaluations: Real$\_$sim, MNIST and RCV1, which can be downloaded from the LibSVM website\footnote{\url{http://www.csie.ntu.edu.tw/ cjlin/libsvmtools/datasets/}}.

We have conducted experiments on logistic regression as in Eq.(\ref{regression}) with real$\_$sim dataset (with $72,309$ data points of $20,958$ feature dimension, or $72,309\times 20,958$) and MNIST dataset ($60,000\times 780$):
\begin{equation}\label{regression}
\min_{x \in \mathbb{R}^{d}}\frac{1}{n}\sum_{i=1}^{n}(log (1+exp(y_{i}z_{i}^{T}x))+\lambda\|x\|^{2});
\end{equation}
and then we perform our algorithm on SVM as in Eq.(\ref{SVM}) with RCV1 dataset ($677,399\times 47,236$):
\begin{equation}\label{SVM}
\min_{x \in \mathbb{R}^{d}}\frac{1}{n}\sum_{i=1}^{n}( max(0, 1-y_{i}(z_{i}^{T}x))+\lambda\|x\|^{2});
\end{equation}
where $z_{i}\in \mathbb{R}^{d}$, and $y_{i}$ is the corresponding label. In our experiments, we set the regularizer $\lambda = 10^{-3}$.

Notice that we perform stochastic algorithms: AsySGD, AsySVRG, AsySQN and SQN-VR on all these real-datasets, while conducting DAve-QN method only on the smallest dataset, MNIST. This is because DAve-QN method needs to store and calculate hessian matrix (not uses limited memory version of BFGS method). Real$\_$sim and RCV1 have a large number of features, so it is hard to fulfill DAve-QN method on these two datasets. DAve-QN performs better in terms of CPU time over MNIST dataset. However, MNIST dataset has a moderate data size and is regular, that is, it doesn't have heterogeneity among features.

Figures \ref{libsvm_data} and \ref{libsvm_data_time} plot the comparisons of the methods mentioned above. By studying the convergence rate with respect to effective datapasses, AsySQN outperforms AsySVRG and AsySGD in all these real datasets. 
When the number of threads equals to 1, it is the original stochastic L-BFGS with VR method. With the increasing number of threads, the speedup will first show an ideal linear rate and ascend to a peak due to the asynchronous parallel within threads. When the number of threads reaches some marginal value, here in our experiments, the marginal value is particular 6, the speedup will descend. This is because the high cost of communication between multiple threads, the overhead of lock, and the resource contention, thus the growth curve of speedup is not as ideal as simulation plots. It may come down to the fact mentioned in earlier research \cite{xiao2014proximal} that benchmark downloaded from LibSVM has been row normalized. Hence AsySQN hasn't shown great advantage over AsySVRG but is still comparable in well-behaved case, which corresponds to the results of the simulation datasets above. In terms of speedup, we still plot AsySVRG as a reference: for first-order methods, locked AsySVRG reaches around 3 times speedup using 8 cores compared with SVRG \cite{reddi2015variance};  second-order method AsySQN, which requires more communications, can reach almost the same speedup with 8 cores. And we would recommend 6 cores in practice. In conclusion, our designed algorithm works well in real datasets.

\section{Conclusion}
\label{sec:conclusions}

  We have proposed a stochastic quasi-Newton method in an asynchronous parallel environment, which can be applied to solve large dense optimization with high accuracy. Compared with prior work on second-order methods, our algorithm represents the first effort in implementing the stochastic version in asynchronous parallel way and provides theoretical guarantee that a linear rate of convergence can be reached in the strongly convex setting. The experimental results demonstrate that our parallel version can effectively accelerate stochastic L-BFGS algorithm. For ill-conditioned problems, our algorithm carries the effectiveness of second-order methods and converges much faster than the best first-order methods such as AsySVRG. The theoretical proof of convergence rate can be used as framework in the asynchronous setting of stochastic second-order methods. In our future work, we may extend the algorithm and analysis to solve non-strongly convex or even non-convex loss functions.

\section*{Acknowledgments}

This work was funded by NSF grants CCF-1514357, IIS-1447711, and DBI-1356655 to Jinbo Bi.  Jinbo Bi was also supported by NIH grants K02-DA043063 and R01-DA037349.

\bibliography{SQNref}

\end{document}